%
\documentclass[12pt, reqno]{amsart}
\usepackage{amsmath, amsthm, amscd, amsfonts, amssymb, graphicx, color}
\usepackage[bookmarksnumbered, colorlinks, plainpages]{hyperref}
\hypersetup{colorlinks=true,linkcolor=red, anchorcolor=green, citecolor=cyan, urlcolor=red, filecolor=magenta, pdftoolbar=true}

\textheight 22.5truecm \textwidth 14.5truecm
\setlength{\oddsidemargin}{0.35in}\setlength{\evensidemargin}{0.35in}

\setlength{\topmargin}{-.5cm}

\newcounter{vremennyj}

\newcommand{\ran}{\operatorname{ran}}



%
{\end{list}}



\numberwithin{equation}{section}

\newtheorem{thm}{Theorem}[section]
\newtheorem{lm}[thm]{Lemma}
\newtheorem{cor}[thm]{Corollary}
\newtheorem{prop}[thm]{Proposition}
\newtheorem{Definition}[thm]{Definition}
\newtheorem{ex}[thm]{Example}

\theoremstyle{remark}
\newtheorem{rem}[thm]{Remark}
\newtheorem*{rem*}{Remark}

\begin{document}

\setcounter{page}{1}

\title[$N$-hypercontractivity and similarity of Cowen-Douglas operators ]
{$N$-hypercontractivity and similarity of Cowen-Douglas operators  }

\author{Kui Ji} \author{Hyun-Kyoung Kwon}\author{Jing Xu}
\email{jikui@hebtu.edu.cn, hkwon6@albany.edu, 1746319384@qq.com}
\address{Department of Mathematics, Hebei Normal University, Shijiazhuang, Hebei, 050016, China}
\address{Department of Mathematics and Statistics, University at Albany- State University of New
York, Albany, NY, 12222, USA}
\thanks{
The first author is supported  by National Natural Science
Foundation of China (Grant No. 11831006)}

\subjclass[2000]{Primary 47C15; Secondary 47B37, 47B48, 47L40}

\keywords{Cowen-Douglas operator, similarity, eigenvector bundle, curvature, subharmonic function}

\begin{abstract}
When the backward shift operator on a weighted space $H^2_w=\{f=\sum_{j=0} ^{\infty} a_jz^j : \sum_{j=0}^{\infty} |a_j|^2w_j < \infty\}$ is an $n$-hypercontraction, we prove that the weights must satisfy the inequality
$$\frac{w_{j+1}}{w_j} \leq {\frac{1+j}{n+j}}.$$  As an application of this result, it is shown that such an operator cannot be subnormal. We also give an example to illustrate the important role that the $n$-hypercontractivity assumption plays in determining the similarity of Cowen-Douglas operators in terms of the curvatures of their eigenvector bundles.

\end{abstract}

\maketitle
\setcounter{tocdepth}{1}

\setcounter{section}{-1}

\section{Introduction}

\label{s1} 
In order to generalize the much-celebrated model theorem of B. Sz.-Nagy and C. Foias, J. Agler in \cite{Agler}, introduced the notion of an $n$-hypercontraction which extends that of a contraction. Let $\mathcal{H}$ be a separable Hilbet space and denote by $\mathcal{L}(\mathcal{H})$ the algebra of bounded, linear operators defined on $\mathcal{H}$. If $n$ is a positive integer, then an \emph{$n$-hypercontraction} is an operator $T \in \mathcal{L}(\mathcal{H})$ with
$$
\sum_{j=0}^k (-1)^j{k \choose j}(T^*)^jT^j \geq 0,
$$
for all $1 \leq k \leq n$.

For a real number $r$ and an integer $0 \leq k \leq r$, set
$$
{r\choose k} = \frac{r(r-1)\ldots(r-k+1)}{k!}.
$$
One then considers the Hilbert space $\mathcal{M}_n$ of analytic functions on the unit disk $\mathbb{D}$ defined as 
$$
\mathcal{M}_n:= \{ f= \sum_{k=0}^{\infty} \hat{f}(k)z^k: \sum_{k=0}^{\infty} |\hat{f}(k)|^2 \frac{1}{{n+k-1
\choose k}} < \infty \}.
$$
As can be easily checked, different function spaces correspond to different $n$'s: the Hardy space for $n=1$ and the weighted Bergman spaces $A^2_{n-2}$ for $n \geq 2$. The space $\mathcal{M}_n$ is a reproducing kernel Hilbert
space with kernel function given by
$$K_n(z,w)=\frac{1}{(1-\overline{w}z)^{n}},$$ for $ z, w \in \mathbb{D}$. The vector-valued spaces $\mathcal{M}_{n, \mathcal{E}}$ with values
in a separable Hilbert space $\mathcal{E}$ can also be naturally defined. The (forward) shift operator $S_{n,\mathcal{E}}$ on $\mathcal{M}_{n, \mathcal{E}}$ is defined as $$S_{n,\mathcal{E}}f(z):=zf(z),$$
and the  backward shift operator $S^*_{n,\mathcal{E}}$ is its
adjoint. 

We are now ready to state the following theorem by J. Agler:

\begin{thm}[\cite{Agler}, \cite{Agler2}]
\label{model}
For $T \in \mathcal{L}(\mathcal{H})$, there exist a Hilbert space $\mathcal{E}$ and an
$S^*_{n, \mathcal{E}}$-invariant subspace $\mathcal{N} \subseteq
\mathcal{M}_{n, \mathcal{E}}$ such that $T$ is unitarily
equivalent to $S^*_{n, \mathcal{E}}|\mathcal{N}$ if and only if
$T$ is an $n$-hypercontraction and $\lim\limits_{j\rightarrow \infty} \|T^j h\|=0$ for all
$h \in \mathcal{H}$.
\end{thm}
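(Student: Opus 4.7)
The theorem is a biconditional; necessity is the easier direction, so I would dispose of it first.

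For necessity, the strategy is to verify both properties on the full model space $\mathcal{M}_{n,\mathcal{E}}$ and note that they are automatically inherited by closed $S^*_{n,\mathcal{E}}$-invariant subspaces. Working in the orthogonal basis $\{z^k e : k \geq 0,\ e \in \mathcal{E}\}$ with $\|z^k\|^2 = 1/\binom{n+k-1}{k}$, I would write each defect $\sum_{j=0}^k (-1)^j \binom{k}{j}(S_{n,\mathcal{E}})^j(S^*_{n,\mathcal{E}})^j$ explicitly on basis elements and verify non-negativity for every $1 \leq k \leq n$; the top defect at $k=n$ collapses onto the orthogonal projection onto the constants $\mathcal{E}$. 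The limit condition $\|(S^*_{n,\mathcal{E}})^j f\| \to 0$ is immediate, since iterating the backward shift deletes the first $j$ Taylor coefficients of $f$, whose tail has norm tending to zero.

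For sufficiency, I would build an explicit intertwining isometry of $\mathcal{H}$ into $\mathcal{M}_{n,\mathcal{E}}$. Set
\[ D_{n,T}^2 := \sum_{j=0}^n (-1)^j \binom{n}{j}(T^*)^j T^j \geq 0, \]
let $D_{n,T}$ be its positive square root, and take $\mathcal{E} := \overline{\ran}\, D_{n,T}$. Because $S^*_{n,\mathcal{E}}$ sends $z^{k+1}e$ to $\tfrac{k+1}{n+k}\, z^k e$, imposing the intertwining $VT = S^*_{n,\mathcal{E}} V$ uniquely determines (up to an overall normalization) the Taylor coefficients of $Vh$ as multiples of $D_{n,T}T^k h$ and yields
\[ (Vh)(z) := \sum_{k=0}^\infty \binom{n+k-1}{k}\,(D_{n,T} T^k h)\, z^k. \]
With this definition the intertwining is immediate, and $\mathcal{N} := \overline{V\mathcal{H}}$ is automatically $S^*_{n,\mathcal{E}}$-invariant; it remains only to show that $V$ is an isometry.

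The remaining, and principal, step is the operator identity
\[ \sum_{k=0}^\infty \binom{n+k-1}{k}\,(T^*)^k D_{n,T}^2\, T^k = I \]
in the strong operator topology. My plan is induction on $n$, based on the Pascal-style recursion
\[ (I-T^*T)^{[n]} = (I-T^*T)^{[n-1]} - T^*(I-T^*T)^{[n-1]}T, \]
where $(I-T^*T)^{[m]}$ abbreviates the $m$-th defect; this turns each summand into a telescoping difference. Summing against the weights $\binom{n+k-1}{k}$ and invoking the binomial identity $\binom{n+k-1}{k} - \binom{n+k-2}{k-1} = \binom{n+k-2}{k}$ drops the weighted identity at level $n$ to the weighted identity at level $n-1$. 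The base case $n=1$ is the classical Sz.-Nagy telescoping $\sum_k (T^*)^k(I-T^*T)T^k = I - \mathrm{SOT}\text{-}\lim_k (T^*)^k T^k$, which collapses to $I$ by the hypothesis $\|T^k h\| \to 0$. The main obstacle I anticipate is the careful analysis of convergence at each inductive stage: one must verify that the partial sums converge strongly, that the weighted boundary remainder $\binom{n+N-1}{N}(T^*)^{N+1}(I-T^*T)^{[n-1]}T^{N+1}$ vanishes in the limit, and that the combinatorial weights line up exactly so the induction closes; only with this bookkeeping in hand does the isometry of $V$, and hence the theorem, follow.
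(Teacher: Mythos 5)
The paper offers no proof of this theorem --- it is quoted from Agler's papers --- so I am judging your proposal on its own terms. Your outline is the standard (and correct) route: necessity by checking the defect operators and the pure-ness condition on $\mathcal{M}_{n,\mathcal{E}}$ and observing both are inherited by restrictions to $S^*_{n,\mathcal{E}}$-invariant subspaces; sufficiency via the canonical map $Vh=\sum_k\binom{n+k-1}{k}(D_{n,T}T^kh)z^k$, whose intertwining property is a one-line computation with the weights $\binom{n+k}{k+1}\cdot\frac{k+1}{n+k}=\binom{n+k-1}{k}$, and whose isometry reduces to the operator identity $\sum_k\binom{n+k-1}{k}(T^*)^kD_{n,T}^2T^k=I$. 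The telescoping recursion $A_m=A_{m-1}-T^*A_{m-1}T$ for $A_m:=\sum_{j=0}^m(-1)^j\binom{m}{j}(T^*)^jT^j$, together with Pascal's rule on the weights, is exactly the right inductive mechanism.

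The one place where you have flagged difficulty rather than resolved it is a genuine gap, and it needs a specific idea, not just bookkeeping. After Abel summation the induction closes only if $\binom{n+N-1}{N}\langle A_{n-1}T^{N+1}h,T^{N+1}h\rangle\to 0$, and since $\binom{n+N-1}{N}\sim N^{n-1}$, this does \emph{not} follow from $\|T^Nh\|\to 0$ together with $A_{n-1}\le I$: a decreasing null sequence $\|T^Nh\|^2$ can decay arbitrarily slowly, so the crude bound $\langle A_{n-1}T^{N+1}h,T^{N+1}h\rangle\le\|T^{N+1}h\|^2$ is useless here. The missing ingredients are: (i) positivity of the top defect $A_n=A_{n-1}-T^*A_{n-1}T\ge 0$ forces $b_k:=\langle A_{n-1}T^kh,T^kh\rangle$ to be \emph{nonincreasing} in $k$; (ii) the inductive hypothesis at level $n-1$ gives $\sum_k\binom{n+k-2}{k}b_k=\|h\|^2<\infty$ with $\binom{n+k-2}{k}\asymp k^{n-2}$; and (iii) the elementary fact that a nonnegative nonincreasing sequence with $\sum_k k^{n-2}b_k<\infty$ satisfies $N^{n-1}b_N\lesssim\sum_{N/2\le k\le N}k^{n-2}b_k\to 0$. (For $n=2$ this is the familiar ``$Nd_N\to 0$ for a decreasing summable sequence'' applied to $d_N=\|T^Nh\|^2-\|T^{N+1}h\|^2$.) The same monotonicity argument also shows the partial sums $\sum_{k=0}^N\binom{n+k-1}{k}(T^*)^kA_nT^k$ are increasing and dominated by $I$, which settles the strong convergence you were worried about. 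With this lemma inserted your induction closes and the proof is complete; without it the argument does not.
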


The functionality of the $n$-hypercontractivity assumption is also apparent in the study of similarity. By using Theorem \ref{model}, the second author, with R. G. Douglas and S. Treil, proved a similarity theorem between an $n$-hypercontractive Cowen-Douglas operator $T \in \mathcal{L}(\mathcal{H})$ and  the backward shift operator $S^*_n$ on $\mathcal{M}_n$ \cite{DKT}. Let us now recall the definition of a Cowen-Douglas operator.
\begin{Definition}[\cite{CD}] Let $\Omega$ be an open connected set of the complex plane
$\mathbb{C}$ and let $m$ be a positive integer. The Cowen-Douglas class $ B_{m}(\Omega)$ consists of operators $ T\in \mathcal{L}(\mathcal{H})$with the following conditions:
\begin{enumerate}
\item${\Omega}\text{ }{\subset}\text{ }{\sigma}(T)=\{w{\in} \mathbb{C} \text{}:
T-w {\mbox { is not invertible}}  \};$

\item $\ran (T-w)$ is closed $\text{ for every } w\in\Omega$;

\item $\bigvee \limits_{w{\in}{\Omega}} \ker(T-w)=\mathcal H$;
and

\item $\dim \ker (T-w)=m \text{ for every } w\in\Omega$.

\end{enumerate}

\end{Definition}

One of the main results of \cite{CD} states that each operator $T\in B_m(\Omega)$ induces a Hermitian holomorphic eigenvector bundle
$$\mathcal{E}_T:=\{(w, x)\in \Omega\times {\mathcal
H}: x \in \ker (T-w)\},$$ over $\Omega$. Since condition (4) implies that $\mathcal{E}_T$ is a bundle of rank $m$, we set $\{e_j(w)\}^{m}_{j=1}$ to be its holomorphic
frame. Letting
$$h(w):=(\langle e_j(w),e_i(w \rangle)_{m \times
m},$$
for each $w\in \Omega,$ the curvature function $\mathcal{K}_T$  of
$\mathcal{E}_T$ is defined as
$$\mathcal{K}_T=-\overline{\partial}(h^{-1}\partial h).$$
For $T\in B_1(\Omega)$, the curvature function is much simpler to calculate as it is equivalent to
$$
\mathcal{K}_T(w)=-\partial \bar{\partial} \log ||\gamma(w)||^2,
$$
where $\gamma(w)\in \ker(T-w)$ is a holomorphic cross section of $\mathcal{E}_T$ \cite{CD}.

More recently, the first two authors, along with Y. Hou, showed that the results of \cite{DKT} can be rephrased.
\begin{thm}[\cite{HJK}]
The following are equivalent:
\begin{enumerate}

\item An $n$-hypercontractive Cowen-Douglas operator $T \in B_m(\mathbb{D})$ is similar to $S^*_{n, \mathbb{C}^m}$ on $\mathcal{M}_{n,\mathbb{C}^m}$.

\item  $\partial \overline{\partial} \psi (w) \geq \text{trace }\mathcal{K}_{\bigoplus\limits^{m}_{j=1}S^*_{n}}(w)-\text{trace }\mathcal{K}_T(w),$
for some positive, bounded, subharmonic fiunction $\psi$ defined on $\mathbb{D}$ and for every $w \in \mathbb{D}$.

\end{enumerate}

\end{thm}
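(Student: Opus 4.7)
The plan is to reformulate both conditions as a boundedness statement for
\[
\phi(w):=\log\!\bigl(\det h_T(w)\,(1-|w|^2)^{nm}\bigr),
\]
and then to appeal to the similarity criterion of \cite{DKT}. The reproducing kernel $K_n(z,w)=(1-\overline{w}z)^{-n}$ together with the orthogonal frame $\epsilon_j(w)=K_n(\cdot,w)e_j$ of the eigenvector bundle of $\bigoplus_{j=1}^m S^*_n$ gives $\La\epsilon_j(w),\epsilon_i(w)\Ra=(1-|w|^2)^{-n}\delta_{ij}$, hence $\det h=(1-|w|^2)^{-nm}$ and $\text{trace }\mathcal{K}_{\bigoplus_{j=1}^m S^*_n}(w)=-\partial\db\log(1-|w|^2)^{-nm}$. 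Combined with the general identity $\text{trace }\mathcal{K}_T=-\partial\db\log\det h_T$, condition (2) is equivalent to asking that $\psi-\phi$ be subharmonic on $\D$.

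For (1)$\Rightarrow$(2), I would assume $T$ is similar to $S^*_{n,\C^m}$ via a bounded invertible $X\colon\CH\to\mathcal{M}_{n,\C^m}$ with $XT=S^*_{n,\C^m}X$. Pulling back the model frame by $e_j(w):=X^{-1}\epsilon_j(w)$ produces a holomorphic frame of $\mathcal{E}_T$ for which $\|X\|^{-2}I\le(X^{-1})^*X^{-1}\le\|X^{-1}\|^2I$ yields
\[
\|X\|^{-2}(1-|w|^2)^{-n}I_m\le h_T(w)\le\|X^{-1}\|^2(1-|w|^2)^{-n}I_m,
\]
so $\phi$ is bounded on $\D$ in this frame. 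Moreover, $n$-hypercontractivity of $T$ forces the pointwise inequality $\text{trace }\mathcal{K}_T\le\text{trace }\mathcal{K}_{\bigoplus_{j=1}^m S^*_n}$ (a trace version of the Misra--Uchiyama curvature estimate), which is exactly $\partial\db\phi\ge 0$, so $\phi$ is subharmonic. Taking $\psi:=\phi+C$ for $C$ larger than $-\inf\phi$ produces a positive bounded subharmonic $\psi$ with $\psi-\phi\equiv C$ trivially subharmonic, establishing (2).

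For (2)$\Rightarrow$(1), the aim is to invoke \cite{DKT}, whose similarity criterion under the $n$-hypercontractivity hypothesis characterises similarity of $T$ to $S^*_{n,\C^m}$ as the existence of a holomorphic frame of $\mathcal{E}_T$ in which $\phi$ is bounded on $\D$. Hypothesis (2) supplies a bounded positive subharmonic $\psi$ with $\psi-\phi$ subharmonic, while the curvature inequality for $n$-hypercontractive operators again gives $\phi$ itself subharmonic. Combining these subharmonic controls, and exploiting the freedom to change frames by holomorphic $\Phi$ (which alters $\log\det h_T$ by the harmonic function $\log|\det\Phi|^2$ and so leaves $\partial\db\phi$ fixed), one selects a frame in which $\phi$ is uniformly bounded, whereupon \cite{DKT} delivers the required bounded invertible intertwiner.

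The main obstacle is this last step in direction (2)$\Rightarrow$(1): converting the subharmonic domination $\psi-\phi$ subharmonic into a genuine two-sided pointwise bound on $\phi$ in an appropriate frame. This amounts to an $L^\infty$-estimate for a weighted $\db$-problem with the bounded subharmonic $\psi$ as plurisubharmonic weight, which is precisely the technical heart of \cite{DKT}; once that bound is in place, the remaining construction of the intertwiner and the equivalence both follow from the explicit curvature identities above.
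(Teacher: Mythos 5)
The paper does not prove this theorem; it is quoted from \cite{HJK} as a known result, so there is no in-paper argument to compare yours against. Judged on its own terms, your forward direction $(1)\Rightarrow(2)$ is essentially sound: pulling back the model frame by the intertwiner gives the two-sided bound on $h_T$, hence boundedness of $\phi$, and the subharmonicity of $\psi=\phi+C$ follows from the curvature trace inequality for $n$-hypercontractions --- though that inequality deserves a justification (e.g.\ via Agler's realization of $T$ as the restriction of $S^*_{n,\mathcal{E}}$ to an invariant subspace and the curvature-decreasing property of holomorphic subbundles) rather than just a name.

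The backward direction, however, contains a genuine gap, and you have located it yourself without closing it. Condition (2) gives only that $\psi-\phi$ is subharmonic for some positive bounded subharmonic $\psi$; the frame freedom you invoke (adding the harmonic function $\log|\det\Phi|^2$ to $\log\det h_T$) cannot by itself produce a frame in which $\phi$ is bounded. Writing $\phi=\psi-(\psi-\phi)$ with $\psi$ bounded and $\psi-\phi$ subharmonic does not even yield an upper bound for $\phi$, since subharmonic functions need not be bounded below, and no harmonic correction repairs this. Converting hypothesis (2) into the two-sided metric bound (equivalently, into the bounded invertible intertwiner) is precisely the weighted $\bar\partial$-estimate of \cite{KT} and \cite{DKT}, i.e.\ the entire analytic substance of the theorem, which your proposal defers rather than proves. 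If the intent is to use \cite{DKT} as a black box, you should quote its similarity criterion in the form it actually takes --- existence of a bounded subharmonic $\psi$ whose Laplacian dominates the appropriate curvature-type quantity --- after which the present statement reduces to the identity $\operatorname{trace}\mathcal{K}_T=-\partial\bar\partial\log\det h_T$ matching the two conditions; as written, your paraphrase of \cite{DKT} as a ``bounded $\phi$ in some frame'' criterion shifts the unproved hard implication into your own argument.
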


When $n=m=1$,  $T$ is a contraction and $S^*_1$ is just the adjoint of the shift operator on the Hardy space.
In \cite{KT1}, the second author and S. Treil gave an example of a backward shift operator $T$ (that is not a contraction) defined on a weighted space that is not similar to $S_1^*$ but such that it  still satisfies the inequality 
$$\partial \overline{\partial} \psi (w) \geq \mathcal{K}_{S_1^*}(w)- \mathcal{K}_T(w).$$
This means that one cannot ignore the contraction assumption when considering the similarity to the backwad shift operator on the Hardy space in terms of curvature. We try to do something analogous here and consider weighted spaces and $n$-hypercontractions. In particular, we give a necessary condition for the backward shift operator defined on a weighted space to be an $n$-hypercontraction. The first two cases are trivial to show. For $n \geq 3$, we make clever use of certain systems of linear equations with solutions that have negative entries. This work is done through the two lemmas in the next section. As corollaries of this result, we consider the subnormality problem of these weighted backward shift operators and also state a related result involving curvature. In the last section, we use $K$-theory to show that without the $n$-hypercontractivity assumption, the similarity criteria given in \cite{HJK} fails for the higher rank cases as well.
\section{$n$-hypercontractive backward shift operators}

The following theorem is our main result of the paper. 

\begin{thm}\label{main}
Let $T$ be the backward shift operator on the space $$H^2_w=\{f=\sum_{j=0} ^{\infty} a_jz^j : \sum_{j=0}^{\infty} |a_j|^2w_j < \infty\},$$ where $w_j >0$, $\liminf_j |w_j|^{\frac{1}{j}}=1$, and $\sup_j \frac{w_{j+1}}{w_j} < \infty$.
If $T$ is  an $n$-hypercontraction, then we have for every nonnegative integer $j$, $$\frac{w_{j+1}}{w_j}  \leq {\frac{1+j}{n+j}}.$$
\end{thm}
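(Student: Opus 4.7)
The plan is to translate just the $k=n$ case of the $n$-hypercontractivity inequalities into a positivity condition on a power series, and then read off the desired weight inequality from the resulting representation.

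First I would take the orthonormal basis $e_j = z^j/\sqrt{w_j}$ of $H^2_w$. Since $T$ is the adjoint of multiplication by $z$, one computes $Te_j = \sqrt{w_j/w_{j-1}}\,e_{j-1}$ for $j \geq 1$ and $Te_0 = 0$, so that $(T^*)^m T^m$ is diagonal with eigenvalue $w_j/w_{j-m}$ at $e_j$ (and $0$ when $j < m$). Evaluating the inequality $\sum_{m=0}^n (-1)^m \binom{n}{m}(T^*)^m T^m \geq 0$ on the diagonal and dividing by $w_j > 0$, I obtain for every $j \geq 0$,
\[
\sum_{m=0}^{\min(n,j)} (-1)^m \binom{n}{m}\, u_{j-m} \,\geq\, 0,
\]
where $u_i := 1/w_i$. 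Setting $U(x) := \sum_{j \geq 0} u_j x^j$, this family of inequalities says precisely that every Taylor coefficient of $(1-x)^n U(x)$ is nonnegative.

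Writing $(1-x)^n U(x) = \sum_s b_s x^s$ with $b_s \geq 0$ and expanding $(1-x)^{-n} = \sum_{i} \binom{n+i-1}{i} x^i$, I would read off the representation
\[
u_l \,=\, \sum_{s=0}^l b_s \binom{n+l-s-1}{l-s}, \qquad l \geq 0.
\]
The target inequality $w_{l+1}/w_l \leq (l+1)/(n+l)$ is equivalent to $(l+1)u_{l+1} \geq (n+l) u_l$. Computing the difference by collecting each $b_s$, the single algebraic identity
\[
(l+1)\binom{n+l-s}{l+1-s} - (n+l)\binom{n+l-s-1}{l-s} \,=\, \frac{s(n-1)\,(n+l-s-1)!}{(l+1-s)\,(l-s)!\,(n-1)!}
\]
shows that the coefficient of $b_s$ in $(l+1)u_{l+1}-(n+l)u_l$ is nonnegative for each $0 \leq s \leq l$, while $s = l+1$ contributes an additional nonnegative term $(l+1)b_{l+1}$. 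Summing over $s$ finishes the proof.

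I expect the main obstacle to be recognizing the right reformulation, namely that the $k=n$ inequality is nothing more than coefficient-positivity of $(1-x)^n U(x)$, which yields the convolution representation of $u_l$ in terms of the binomials $\binom{n+i-1}{i}$ coming from $(1-x)^{-n}$. After that, everything rests on the elementary identity $(l+1)(n+l-s) - (n+l)(l+1-s) = s(n-1)$. A notable feature of this plan is that it only uses the single $k = n$ inequality, rather than the full family $1 \leq k \leq n$ of hypercontractivity conditions.
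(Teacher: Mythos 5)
Your proposal is correct, and it takes a genuinely different and substantially shorter route than the paper's. The key computations all check: in the orthonormal basis $e_j=z^j/\sqrt{w_j}$ the operator $(T^*)^mT^m$ is diagonal with entry $w_j/w_{j-m}$, so positivity of $\sum_{m=0}^n(-1)^m\binom{n}{m}(T^*)^mT^m$ is exactly nonnegativity of every coefficient $b_s$ of $(1-x)^nU(x)$ with $U(x)=\sum_j u_jx^j$, $u_j=1/w_j$; the inverse relation $u_l=\sum_{s=0}^l b_s\binom{n+l-s-1}{l-s}$ is a purely formal (finite) convolution, so no convergence issues arise; and with $t=l-s$ one has $(l+1)(n+t)-(n+l)(t+1)=(n-1)(l-t)=(n-1)s$, which gives your identity and shows each $b_s$ enters $(l+1)u_{l+1}-(n+l)u_l$ with a nonnegative coefficient. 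The paper instead works multiplicatively in the ratios $\lambda_j=w_{j+1}/w_j$: it constructs explicit negative solutions $x_j=-\binom{n+j-2}{j}\frac{k-j}{k}$ to two triangular linear systems (its Lemmas 1.2 and 1.3, proved by comparing coefficients in $(1+x)^n(1+x)^{-(n-1)}=1+x$ and its derivative), and then inductively recombines the diagonal inequalities, with products of $\lambda$'s as weights, to isolate $1-\bigl[\binom{n}{1}+x_1\bigr]\lambda_{k-1}\geq 0$; this requires separate treatment of $n=1,2$, of $k\leq n$, and of $k>n$. Both arguments use only the top condition $k=n$ (the paper's display of diagonal inequalities is precisely that case), so neither needs the full family $1\leq k\leq n$. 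Your linearization via $u_j=1/w_j$ eliminates the induction, the case split, and both lemmas, replacing them with the single elementary identity above; the paper's version, on the other hand, makes visible the recursive structure $\lambda_k\leq 1/(\binom{n}{1}+x_1)$ that it reuses in its corollaries.
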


\

\begin{rem}
Note that the condition $\liminf_j |w_j|^{\frac{1}{j}}=1$ makes $H^2_w$ a space of analytic functions on the unit disk $\mathbb{D}$, while the condition $\sup_j \frac{w_{j+1}}{w_j} < \infty$ guarantees the boundedness of the shift operator on the space.
It is also easy to see that $T$ should be of the form
$$
T \left (\sum_{j=0} ^{\infty} a_jz^j \right )=\sum_{j=0} ^{\infty} \frac{w_{j+1}}{w_j} a_{j+1}z^j.
$$
Based on the definition given by A. L. Shields in \cite{SH}, $T$ is a weighted shift operator with weight sequence given by $\left \{ \sqrt {{\frac{w_{j+1}}{w_j}}} \right \}_{j=0} ^{\infty}$. 
\end{rem}
To give a proof of the above theorem, we need a few lemmas.

\begin{lm}\label{x1} Let $n \geq 2$ be a positive integer. For each $1 \leq j \leq k-1$, set
$$x_{j}=-{n+(j-2)\choose j}\frac{k-j}{k},$$
where $2 \leq k \leq n$. Then,

\

$\begin{bmatrix}\begin{smallmatrix}\setlength{\arraycolsep}{0.2pt}
\begin{array}{ccccccc}
-{n \choose 1}&1&0&\cdots&0&0\\
{n \choose 2}&-{n \choose 1}&1&\cdots&0&0\\
-{n \choose 3}&{n \choose 2}&-{n \choose 1}&\cdots&0&0\\
\vdots&\vdots&\vdots&\ddots&\vdots&\vdots\\
(-1)^{j}{n \choose j}&(-1)^{j-1}{n \choose j-1}&(-1)^{j-2}{n \choose j-2}&\cdots&0&0\\
\vdots&\vdots&\vdots&\ddots&\vdots&\vdots\\
(-1)^{k-2}{n \choose k-2} & (-1)^{k-3}{n \choose k-3} & (-1)^{k-4}{n \choose k-4} & \cdots & -{n \choose 1} & 1\\
(-1)^{k-1}{n \choose k-1} & (-1)^{k-2}{n \choose k-2} & (-1)^{k-3}{n \choose k-3} & \cdots & {n \choose 2} & -{n \choose 1}\\
\end{array}

\end{smallmatrix}\end{bmatrix}
\begin{bmatrix}
x_1 \\ x_2 \\ x_3 \\ \vdots \\ x_j \\ \vdots \\x_{k-2} \\ x_{k-1}
\end{bmatrix}
=
\begin{bmatrix}(-1)^{2}{n\choose 2}\\  (-1)^{3}{n\choose 3}\\ (-1)^{4}{n\choose 4}\\ \vdots \\ (-1)^{j+1}{n\choose j+1}\\  \vdots \\ (-1)^{k-1}{n\choose k-1}\\ (-1)^{k}{n\choose k}\end{bmatrix}$.

\

\end{lm}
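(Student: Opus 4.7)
The plan is to rewrite the matrix equation as a vanishing statement about coefficients of a product of power series, and then verify the candidate via a generating-function decomposition. First I would augment the list of unknowns by setting $x_0 := -1$ and $x_k := 0$. A short bookkeeping check---moving each right-hand side to the left and absorbing it through $x_0 = -1$---shows that the $i$-th original equation, for $1 \le i \le k-2$, becomes
\[
\sum_{j=0}^{i+1}(-1)^{i+1-j}\binom{n}{i+1-j}\,x_j \;=\; 0,
\]
while the last equation $(i = k-1)$ becomes $\sum_{j=0}^{k-1}(-1)^{k-j}\binom{n}{k-j}\,x_j = 0$, which fits the same template once we adopt $x_k = 0$. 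Letting $X(z) := \sum_{j=0}^{k-1} x_j z^j$, the system is thus equivalent to
\[
[z^p]\bigl((1-z)^n X(z)\bigr) \;=\; 0, \qquad p = 2, 3, \ldots, k.
\]

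Next, to verify that the proposed formula meets these conditions, I would use the elementary identity $j\binom{n+j-2}{j} = (n-1)\binom{n+j-2}{j-1}$ to split the candidate as
\[
x_j \;=\; -\binom{n+j-2}{j} + \frac{n-1}{k}\binom{n+j-2}{j-1}.
\]
Extending $x_j$ to all $j \geq 0$ by this same formula (an extension that still yields $x_0 = -1$ and $x_k = 0$), I recognize the two pieces as the coefficients of $-(1-z)^{-(n-1)}$ and of $\tfrac{n-1}{k}\,z(1-z)^{-n}$, respectively. Writing $\widetilde{G}(z)$ for the resulting full generating series, multiplying by $(1-z)^n$ collapses everything:
\[
(1-z)^n\,\widetilde{G}(z) \;=\; -(1-z) + \frac{n-1}{k}\,z \;=\; -1 + \frac{n+k-1}{k}\,z,
\]
so $[z^p]\bigl((1-z)^n \widetilde{G}(z)\bigr) = 0$ for every $p \geq 2$.

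Finally, since $[z^p]\bigl((1-z)^n Y(z)\bigr)$ depends only on the first $p+1$ coefficients of $Y(z)$, and since the polynomial $X(z)$ agrees with $\widetilde{G}(z)$ in degrees $0$ through $k$, the vanishings required in the first step follow from those in the second step throughout the range $2 \leq p \leq k$. The main obstacle, I expect, is the reformulation in the first step: recognizing that the Pascal-type matrix entries of the system read off as convolution coefficients of $(1-z)^n$ against a polynomial, and noticing that the dummy adjunctions $x_0 = -1$ and $x_k = 0$ unify the bulk rows with the bottom row. Once that is in place, the decomposition of $x_j$ and the subsequent generating-function cancellation are essentially forced---one may equivalently replace them by two applications of Vandermonde's convolution, which deliver the same vanishings directly.
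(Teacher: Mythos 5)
Your proof is correct and rests on the same generating-function computation as the paper's: the paper splits $x_j$ into the same two pieces and shows that the coefficients of $x^m$ in $(1+x)^n(1+x)^{-(n-1)}=1+x$ and of $x^{m-1}$ in $(1+x)^n\bigl[(1+x)^{-(n-1)}\bigr]'=1-n$ vanish for $m\ge 2$, which is exactly your identity $(1-z)^n\widetilde{G}(z)=-1+\tfrac{n+k-1}{k}z$ after the substitution $x\mapsto -z$. Your padding $x_0=-1$, $x_k=0$ simply repackages the paper's induction-plus-separate-last-equation bookkeeping into one uniform convolution statement, which is a cleaner write-up of the same argument.
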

\begin{proof}
The conclusion is equivalent to
$$\linespread{0.5}\selectfont
\left\{
\begin{array}{cc}
{n \choose 2}=-{n \choose 1}x_{1}+x_{2}\\

-{n \choose 3}={n \choose 2}x_{1}-{n \choose 1}x_{2}+x_{3}\\
\vdots\\
(-1)^{j+1}{n \choose j+1}=(-1)^{j}{n \choose j}x_{1}+(-1)^{j-1}{n \choose j-1}x_{2}+\cdots+(-1)^{1}{n \choose 1}x_{j}+x_{j+1}\\
\vdots\\
(-1)^{k-1}{n \choose k-1}=(-1)^{k-2}{n \choose k-2}x_{1}+(-1)^{k-3}{n \choose k-3}x_{2}+\cdots+(-1)^{1}{n \choose 1}x_{k-2}+x_{k-1}\\
\\
(-1)^{k}{n \choose k}=(-1)^{k-1}{n \choose k-1}x_{1}+(-1)^{k-2}{n \choose k-2}x_{2}+\cdots+(-1)^{2}{n \choose 2}x_{k-2}+(-1)^{1}{n \choose 1}x_{k-1}.
\end{array}
\right.
$$
We proceed by strong induction. Let $x_{1}=-{n+(1-2)\choose 1}\frac{k-1}{k}$
and substituting this into the first equation, we obtain $$x_{2}=-{n+(2-2)\choose 2}\frac{k-2}{k}.$$ For $2 \leq m \leq k-1$, we will set for each $1 \leq j \leq m-1$,
$$x_{j}=-{n+(j-2)\choose j}\frac{k-j}{k},$$
and prove that $$x_{m}=-{n+(m-2)\choose m}\frac{k-m}{k}.$$
This means that
$$x_{j}=-{n+(j-2)\choose j}\frac{k-j}{k},$$ for $1 \leq j \leq m$,
must satisfy the equations
\begin{equation}\label{one}
(-1)^{m}{n \choose m}-(-1)^{m-1}{n \choose m-1}x_{1}-(-1)^{m-2}{n \choose m-2}x_{2}-\cdots-(-1)^{1}{n \choose 1}x_{m-1}-x_{m}=0,
\end{equation}
and
\begin{equation}\label{two}
(-1)^{k}{n \choose k}=(-1)^{k-1}{n \choose k-1}x_{1}+(-1)^{k-2}{n \choose k-2}x_{2}+\cdots+(-1)^{2}{n \choose 2}x_{k-2}+(-1)^{1}{n \choose 1}x_{k-1}.
\end{equation}

Note that (\ref{one}) is equivalent to
\begin{eqnarray*}
0 &=&\sum\limits_{j=0}^{m}(-1)^{j}{n \choose j}{n+m-2-j\choose m-j}\frac{k-(m-j)}{k}\\
&=&\sum\limits_{j=0}^{m}(-1)^{j}{n \choose j}{n+m-2-j \choose m-j}-\frac{1}{k}\sum\limits_{j=0}^{m-1}(-1)^{j}(m-j){n \choose j}{n+m-2-j \choose m-j}.
\end{eqnarray*}
To show (\ref{one}), we will prove
\begin{equation}\label{first}\sum\limits_{j=0}^{m}(-1)^{j}{n \choose j}{n+m-2-j \choose m-j}=0,
\end{equation}
and \begin{equation} \label{second} \sum\limits_{j=0}^{m-1}(-1)^{j}(m-j){n \choose j}{n+m-2-j \choose m-j}=0.\end{equation}
Since
$$(1+x)^{n}=\sum\limits_{j=0}^{n}{n \choose j}x^{j},$$
and $$(1+x)^{-n}=\sum\limits_{j=0}^{\infty}{-n \choose j}x^{j}=\sum\limits_{j=0}^{\infty}(-1)^{j}{n+j-1 \choose j}x^{j},$$
for $n \geq 0$, it follows that
\begin{equation}\label{forfirst} 1+x=(1+x)^{n}\times(1+x)^{-(n-1)}= \left (\sum\limits_{j=0}^{n}{n \choose j}x^{j} \right ) \left (\sum\limits_{j=0}^{\infty}(-1)^{j}{n+j-2 \choose j}x^{j} \right ).\end{equation}

One then observes that the coefficients of $x^m$ for $m \geq 0$ on the right side of (\ref{forfirst}) are given by $$\sum\limits_{j=0}^{m}(-1)^{m-j}{n \choose j}{n+m-2-j \choose m-j}.$$ Comparing the coefficients of $x^m$ from both sides of (\ref{forfirst}) for $m \geq 2$ now yields (\ref{first}).

Similarly, since $$-\frac{n-1}{(1+x)^{n}}=[(1+x)^{-(n-1)}]^{\prime}= \left [ \sum\limits_{j=0}^{\infty}(-1)^{j}{n+j-2 \choose j}x^{j} \right ]'=\sum\limits_{j=1}^{\infty}(-1)^{j}j{n+j-2 \choose j}x^{j-1},$$
we have
\begin{equation}\label{forsecond}1-n=(1+x)^{n}\times\frac{1-n}{(1+x)^{n}}=\left (\sum\limits_{j=0}^{n}{n \choose j}x^{j}\right ) \left (\sum\limits_{j=1}^{\infty}(-1)^{j}j{n+j-2 \choose j}x^{j-1}\right ).\end{equation}

The coefficients of $x^{m-1}$ for $m \geq1$ on the right side of (\ref{forsecond}) are given by $$\sum \limits_{j=0}^{m-1}(-1)^{m-j}(m-j){n \choose j}{n+m-2-j \choose m-j},$$ and again, (\ref{second}) readily follows from comparing the coefficients in (\ref{forsecond}) for $m \geq 2$.

Lastly, since
\begin{eqnarray*}
0&=&(-1)^{k}{n \choose k}-(-1)^{k-1}{n \choose k-1}x_{1}-(-1)^{k-2}{n \choose k-2}x_{2}-\cdots-(-1)^{1}{n \choose 1}x_{k-1}\\
&=&\sum\limits_{j=1}^{k}(-1)^{j}{n \choose j}{n+k-2-j\choose k-j}\frac{k-(k-j)}{k}\\
&=&\sum\limits_{j=1}^{k}(-1)^{j}{n \choose j}{n+k-2-j\choose k-j}-\frac{1}{k}\sum\limits_{j=1}^{k-1}(-1)^{j}(k-j){n \choose j}{n+k-2-j \choose k-j}\\
&=&\sum\limits_{j=0}^{k}(-1)^{j}{n \choose j}{n+k-2-j \choose k-j}-\frac{1}{k}\sum\limits_{j=0}^{k-1}(-1)^{j}(k-j){n \choose j}{n+k-2-j \choose k-j},
\end{eqnarray*}
($\ref{two}$) holds based on what was done for ($\ref{one})$.
\end{proof}

\begin{lm}\label{x2}
Let $m, n \geq 2$ be positive integers. For each $1 \leq j \leq n+m-1$, set
$$x_{j}=-{n+(j-2)\choose j}\frac{n+m-j}{n+m}.$$
 Then,
$$\left [ \begin{smallmatrix}
(-1)^{1}{n \choose 1}&1&0&......&0&0&......&0&0&\\
(-1)^{2}{n \choose 2}&(-1)^{1}{n\choose 1}&1&......&0&0&......&0&0&\\
\vdots&\vdots&\vdots&\ddots&\vdots&\vdots&\ddots&\vdots&\vdots&\\
(-1)^{j}{n\choose j}&(-1)^{j-1}{n\choose j-1}&(-1)^{j-2}{n\choose j-2}&\cdots&0&0&......&0&0&\\
\vdots&\vdots&\vdots&\ddots&\vdots&\vdots&\ddots&\vdots&\vdots&\\
(-1)^{n-1}{n\choose n-1}&(-1)^{n-2}{n\choose n-2}&(-1)^{n-3}{n\choose n-3}&\cdots&0&0&......&0&0&\\
(-1)^{n}{n\choose n}&(-1)^{n-1}{n\choose n-1}&(-1)^{n-2}{n\choose n-2}&\cdots&1&0&......&0&0&\\
0&(-1)^{n}{n\choose n}&(-1)^{n-1}{n\choose n-1}&\cdots&(-1)^{1}{n\choose 1}&1&\cdots&0&0&\\
\vdots&\vdots&\vdots&\ddots&\vdots&\vdots&\ddots&\vdots&\vdots&\\
0&0&0&\cdots&0&0&\cdots&(-1)^{2}{n\choose 2}&(-1)^{1}{n\choose 1}
\end{smallmatrix}\right ]
\left [\begin{smallmatrix}
x_{1}\\
x_{2}\\
\vdots\\
x_{j}\\
\vdots\\
x_{n-1}\\
x_{n}\\
x_{n+1}\\
\vdots\\
x_{n+m-1}\\
\end{smallmatrix}\right ]
=
\left [ \begin{smallmatrix}
(-1)^{2}{n\choose 2}\\
(-1)^{3}{n\choose 3}\\
\vdots\\
(-1)^{j+1}{n\choose j+1}\\
\vdots\\
(-1)^{n}{n\choose n}\\
0\\
0\\
\vdots\\
0\\
\end{smallmatrix}\right ].$$

\end{lm}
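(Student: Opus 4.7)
The plan is to reduce the entire matrix system of Lemma \ref{x2} to the two binomial identities (\ref{first}) and (\ref{second}) already established in the proof of Lemma \ref{x1}, with a careful bookkeeping step at the final row.

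First, I would adjoin the formal value $x_0 := -1$, which extends the given formula $x_j = -{n+j-2 \choose j}\frac{n+m-j}{n+m}$ consistently to $j = 0$. With this convention and with ${n \choose s} := 0$ for $s < 0$ or $s > n$, the $i$-th row of the matrix equation (for $1 \leq i \leq n+m-1$) becomes the single statement
$$
\sum_{k=0}^{n+m-1}(-1)^{i+1-k}{n \choose i+1-k}\, x_k \;=\; 0.
$$
Indeed, the $k = i+1$ term supplies the $x_{i+1}$ in the matrix row, the terms $1 \leq k \leq i$ recover the banded convolution, and the $k = 0$ contribution $-(-1)^{i+1}{n \choose i+1}$ absorbs the right-hand side $(-1)^{i+1}{n \choose i+1}$, which is in any case zero whenever $i \geq n$ (matching the zero entries of the right-hand side column).

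Next, I would split $x_k = -{n+k-2 \choose k} + \frac{k}{n+m}{n+k-2 \choose k}$ so that the displayed sum decomposes as
$$
-\sum_{k}(-1)^{i+1-k}{n \choose i+1-k}{n+k-2 \choose k} \;+\; \frac{1}{n+m}\sum_{k}(-1)^{i+1-k}{n \choose i+1-k}{n+k-2 \choose k}\,k.
$$
Under the substitution $j := i+1-k$, these are exactly the sums appearing in (\ref{first}) and (\ref{second}) with the dummy parameter $m$ of those identities replaced by $i+1 \geq 2$. Hence for $1 \leq i \leq n+m-2$ the summands are supported in $0 \leq k \leq i+1 \leq n+m-1$, so truncating at $n+m-1$ changes nothing and both sums vanish by Lemma \ref{x1}.

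The main obstacle, and the only delicate case, is the last row $i = n+m-1$: here the identities (\ref{first}) and (\ref{second}) would range over $0 \leq k \leq n+m$, while the matrix supplies no $x_{n+m}$, so the truncation at $n+m-1$ really does drop the $k = n+m$ term. To finish, I would write each truncated sum as (complete sum, which equals $0$) minus (missing term at $k = n+m$). The missing piece from the first sum is ${n \choose 0}{2n+m-2 \choose n+m} = {2n+m-2 \choose n+m}$, and from the second sum it is $(n+m){2n+m-2 \choose n+m}$. Assembling these two residues with the correct signs and with the prefactor $\frac{1}{n+m}$ on the second shows that they cancel exactly. This forced cancellation is precisely what pins down the denominator $n+m$ in the definition of $x_j$ and completes the argument.
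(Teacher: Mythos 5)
Your proof is correct and follows essentially the same route as the paper's: both arguments reduce every row of the system to the two generating-function identities (\ref{first}) and (\ref{second}) from the proof of Lemma \ref{x1} (applied with the parameter there allowed to run up to $n+m$, as the paper does in (\ref{endfirst}) and (\ref{endsecond})), and both resolve the last row by the identical cancellation of the missing $k=n+m$ (equivalently $j=0$) contributions ${2n+m-2 \choose n+m}$ and $\tfrac{1}{n+m}(n+m){2n+m-2 \choose n+m}$. Your device of adjoining $x_0=-1$ is a tidy repackaging of the paper's three separate groups of equations into one convolution identity, but it is not a different method.
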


\begin{proof}
The conclusion is equivalent to 
$${\linespread{0.5}\selectfont
\left\{
\begin{array}{cc}
{n \choose 2}=-{n \choose 1}x_{1}+x_{2}\\
\\
-{n \choose 3}={n \choose 2}x_{1}-{n \choose 1}x_{2}+x_{3}\\
\\
\vdots\\
\\
(-1)^{j+1}{n \choose j+1}=(-1)^{j}{n \choose j}x_{1}+(-1)^{j-1}{n \choose j-1}x_{2}+\cdots+(-1)^{1}{n \choose 1}x_{j}+x_{j+1}\\
\\
\vdots\\
\\
(-1)^{n}{n \choose n}=(-1)^{n-1}{n \choose n-1}x_{1}+(-1)^{n-2}{n \choose n-2}x_{2}+\cdots+(-1)^{1}{n \choose 1}x_{n-1}+x_{n}\\
\\
0=(-1)^{n}{n \choose n}x_1+(-1)^{n-1}{n \choose {n-1}}x_2+\cdots +(-1)^{1}{n \choose 1}x_{n}+x_{1+n}
\\
\vdots\\

\\
0=(-1)^{n}{n \choose n}x_{i}+(-1)^{n-1}{n \choose n-1}x_{i+1}+\cdots+(-1)^{1}{n \choose 1}x_{i+n-1}+x_{i+n}\\
\\
\vdots\\
\\
0=(-1)^{n}{n \choose n}x_{m}+(-1)^{n-1}{n \choose n-1}x_{m+1}+\cdots+(-1)^{1}{n \choose 1}x_{n+m-1}.
\end{array}
\right.}
$$

If we set $$x_{1}=-{n+(1-2) \choose 1}\frac{n+m-1}{n+m},$$
then according to Lemma \ref{x1} with $k=n$, $$x_{j}=-{n+(j-2)\choose j}\frac{n+m-j}{n+m},$$ for all $1 \leq j \leq n-1$.

As in the previous lemma, we then show that  $$x_{j}=-{n+(j-2)\choose j}\frac{n+m-j}{n+m},$$ defined for $1 \leq  j \leq n+m-1$, satisfy the following three equations:
First,
\begin{equation}\label{secondfirst}
0=(-1)^{n}{n \choose n}-(-1)^{n-1}{n \choose n-1}x_{1}-(-1)^{n-2}{n \choose n-2}x_{2}-\cdots-(-1)^{1}{n \choose 1}x_{n-1}-x_{n}.
\end{equation}
Second, for $1 \leq i \leq m-1$,
\begin{equation}\label{secondsecond}
0=(-1)^{n}{n \choose n}x_{i}+(-1)^{n-1}{n \choose n-1}x_{i+1}+\cdots+(-1)^{1}{n \choose 1}x_{i+n-1}+x_{i+n},
\end{equation}
and third,
\begin{equation}\label{secondthird}
0=(-1)^{n}{n \choose n}x_{m}+(-1)^{n-1}{n \choose n-1}x_{m+1}+\cdots+(-1)^{2}{n \choose 2}x_{n+m-2}+(-1)^{1}{n \choose 1}x_{n+m-1}.
\end{equation}

For (\ref{secondfirst}), we see that it is equivalent to
\begin{eqnarray*}
0 &=&\sum\limits_{j=0}^{n}(-1)^{j}{n \choose j}{2n-2-j \choose n-j}\frac{n+m-(n-j)}{n+m}\\
&=&\sum\limits_{j=0}^{n}(-1)^{j}{n \choose j}{2n-2-j \choose n-j}-\frac{1}{n+m}\sum\limits_{j=0}^{n}(-1)^{j}(n-j){n \choose j}{2n-2-j \choose n-j},
\end{eqnarray*}
while to prove (\ref{secondsecond}), we show that for $1 \leq i \leq m-1$,

\begin{eqnarray*}
0 &=&-\sum\limits_{j=0}^{n}(-1)^{j}{n \choose j}{2n+i-2-j \choose n+i-j}\frac{n+m-(n+i-j)}{n+m}\\
&=&-\sum\limits_{j=0}^{n}(-1)^{j}{n \choose j}{2n+i-2-j \choose n+i-j}+\frac{1}{n+m}\sum\limits_{j=0}^{n}(-1)^{j}(n+i-j){n \choose j}{2n+i-2-j \choose n+i-j}.
\end{eqnarray*}

Finally, (\ref{secondthird}) amounts to showing
\begin{eqnarray*}
0 &=&-\sum\limits_{j=1}^{n}(-1)^{j}{n \choose j}{2n+m-2-j \choose n+m-j}\frac{n+m-(n+m-j)}{n+m}\\
&=&-\sum\limits_{j=1}^{n}(-1)^{j}{n \choose j}{2n+m-2-j \choose n+m-j}+\frac{1}{n+m}\sum\limits_{j=1}^{n}(-1)^{j}(n+m-j){n \choose j}{2n+m-2-j \choose n+m-j}\\
&=&-\sum\limits_{j=0}^{n}(-1)^{j}{n \choose j}{2n+m-2-j \choose n+m-j}+\frac{1}{n+m}\sum\limits_{j=0}^{n}(-1)^{j}(n+m-j){n \choose j}{2n+m-2-j \choose n+m-j}.
\end{eqnarray*}

Note that once we prove that
\begin{equation} \label{endfirst}
\sum\limits_{j=0}^{n}(-1)^{j}{n \choose j}{n+k-2-j \choose k-j}=0,
\end{equation}
and that
\begin{equation} \label{endsecond}
\sum\limits_{j=0}^{n}(-1)^{j}(k-j){n \choose j}{n+k-2-j \choose k-j}=0,
\end{equation}
for $n \leq k \leq n+m$, the
equations (\ref{secondfirst}), (\ref{secondsecond}), and (\ref{secondthird}) will immediately follow.

But it was already calculated in the previous lemma that for each $k \geq 0$, the term 
$$
\sum\limits_{j=0}^{k}(-1)^{k-j}{n \choose j}{n+k-2-j \choose k-j}
$$
represents the coefficient of $x^k$ in the expression $1+x$. Since $k \geq n \geq 2$ and ${n \choose k}=0$ for $k > n$, (\ref{endfirst}) holds. One can show analogously that (\ref{endsecond}) is true by using the fact that
 $$\sum \limits_{j=0}^{k-1}(-1)^{k-j}(k-j){n \choose j}{n+k-2-j \choose k-j}$$
 is the coefficient of $x^{k-1}$ in the expression $1-n$ for $k \geq 1$.

\end{proof}

\subsection{Proof of Theorem \ref{main}}

\begin{proof}
The operator $T$ is of the form
$$
T \left (\sum_{j=0} ^{\infty} a_jz^j \right )=\sum_{j=0} ^{\infty} \frac{w_{j+1}}{w_j} a_{j+1}z^j, 
$$
and for the sake of simplicity, we will now set $$\lambda_j:=\frac{w_{j+1}}{w_j}.$$
Then,
$$
T=\begin{pmatrix} 0 & \sqrt{\lambda_0} & 0 & 0 & 0 & \cdots \\ 0 & 0 & \sqrt{\lambda_1} & 0 & 0 & \cdots \\ 0 & 0 & 0 & \sqrt{\lambda_2} & 0 & \cdots \\ \vdots  & \vdots & \vdots  & & \ddots\end{pmatrix},
$$
and for every $m \geq 1$, $T^{*m}T^m$ is the diagonal matrix with the nonzero entry $$\prod_{j=k-1}^{m+k-2} \lambda_j,$$ in the $(m+k) \times (m+k)$ position for each positive integer $k$.

If $T$ is a $1$-hypercontraction, that is,
 $$I-T^*T \geq 0,$$
 then by looking at the entries of $I-T^*T$, we have
 $$
 1-\lambda_j \geq 0,
 $$
 for every nonnegative integer $j$.
 This means that $\lambda_j \leq1={\frac{1+j}{1+j}}$ for every nonnegative integer $j$.\\

 If $T$ is a $2$-hypercontraction so that $$
\sum_{j=0}^2 (-1)^j{2 \choose j}(T^*)^jT^j \geq 0,
$$ then $$\begin{cases}1\geq0 \\ 1-{2 \choose 1}\lambda_0 \geq0\\
1-{2 \choose 1}\lambda_1+{2 \choose 2}\lambda_1 \lambda_0 \geq0\\
\vdots\\
1-{2 \choose 1}\lambda_k+{2 \choose 2} \ \lambda_k \lambda_{k-1}\geq0	\\
\vdots\\
\end{cases}.$$
From this, it is easy to see that $$\lambda_0 \leq {\frac{1+0}{2+0}} \text{ and }\lambda_1 \leq\frac{1}{2-\lambda_0}\ \leq \frac{1+1}{2+1}.$$

Now if we suppose that $$\lambda_{k-1} \leq {\frac{1+(k-1)}{2+(k-1)}},$$ then it follows that $$\lambda_k \leq \frac{1}{2-\lambda_{k-1}} \leq \frac{1+k}{2+k}.$$\\

If $T$ is  an $n$-hypercontraction for $n\geq3$, then $$
\sum_{j=0}^n (-1)^j{n \choose j}(T^*)^jT^j \geq 0,$$
which equals
\begin{equation}
\begin{cases}
1\geq0\qquad\qquad\qquad\qquad\qquad\qquad\qquad\qquad\qquad\qquad\qquad\qquad\qquad \\
1-{n \choose 1}\lambda_0 \geq 0 \qquad\qquad\qquad\,\,\,\,\,\,\,\qquad\qquad\qquad\,\qquad\qquad\qquad\qquad (1)\\
1-{n \choose 1}\lambda_1 +{n \choose 2}\lambda_1 \lambda_0 \geq0\qquad\qquad\qquad\,\,\,\,\,\,\,\,\,\,\,\,\,\,\,\,\,\,\,\,\,\,\, \qquad\qquad\qquad\,\,\,\,\, (2)\\
1-{n \choose 1}\lambda_2+{n \choose 2}\lambda_2 \lambda_1-{n \choose 3}\lambda_2 \lambda_1 \lambda_0 \geq0\, \, \, \,\,\,\, \, \, \, \, \, \, \, \, \, \, \,\,   \qquad \qquad \qquad \qquad (3)\\
\vdots\\
1+\sum\limits_{j=1}^{k}(-1)^{j}{n \choose j}\lambda_{k-1}\lambda_{k-2}\cdots \lambda_{k-j} \geq0\,\,\,\,\,\,\,\qquad \qquad \qquad \qquad\qquad\,(k)\\
\vdots\\
1+\sum\limits_{j=1}^{n}(-1)^{j}{n \choose j}\lambda_{n-1}\lambda_{n-2}\cdots \lambda_{n-j} \geq0\,\,\,\,\,\,\,\qquad \qquad \qquad \qquad\qquad(n)\\

1+\sum\limits_{j=1}^{n}(-1)^{j}{n \choose j}\lambda_{n}\lambda_{n-1}\cdots \lambda_{n+1-j} \geq0\,\,\,\,\qquad\qquad \qquad \qquad\qquad(n+1)\\
\vdots\\
1+\sum\limits_{j=1}^{n}(-1)^{j}{n \choose j}\lambda_{n+m-1}\lambda_{n+m-2}\cdots \lambda_{n+m-j} \geq0 \,\,\,\,\,\,\,\qquad \qquad \qquad(n+m)\\
\vdots
\end{cases}
\end{equation}
From inequality $(1)$, we get $$\lambda_0 \leq\frac{1}{n}=\frac{1+0}{n+0},$$
and we use it together with inequality $(2)$ to obtain $$\lambda_1 \leq\frac{1+1}{n+1}.$$

It is now the right time to resort to the lemmas that have been proved previously. Namely, by Lemma $1.2$, the $x_j=-{n+(j-2) \choose j}\frac{3-j}{3}$, for $1 \leq j \leq 2$, satisfy the equation
$$
\begin{bmatrix}
-{n \choose 1}&1\\
{n \choose 2}&-{n \choose 1}\\
\end{bmatrix}
\begin{bmatrix}
\begin{array}{cc}
 x_{1}\\
 x_{2}\\
\end{array}
\end{bmatrix}
=
\begin{bmatrix}
\begin{array}{cc}
{n \choose 2}\\
-{n \choose 3}\\
\end{array}
\end{bmatrix},
$$
that is,
$$
\left\{
\begin{array}{cc}
{n \choose 2}=-{n \choose 1}x_{1}+x_{2} \\
\\
-{n \choose 3}={n \choose 2}x_{1}-{n \choose 1}x_{2}
\end{array}.
\right.
$$
Plugging this into inequality $(3)$, we have

\

$0 \leq 1-{n \choose 1}\lambda_2+{n \choose 2}\lambda_2 \lambda_1-{n \choose 3}\lambda_2\lambda_1\lambda_0\\$
\\
$=1-{n \choose 1}\lambda_2+[-{n \choose 1}x_1+x_2 ]\lambda_2 \lambda_1+[{n \choose 2}x_1-{n \choose 1}x_2 ]\lambda_2 \lambda_1 \lambda_0\\$
\\
$=1-\lambda_2[{n \choose 1}+x_{1}]+x_{1}\lambda_2[1-{n \choose 1}\lambda_1+{n \choose 2}\lambda_1 \lambda_0 ]+x_{2}\lambda_2 \lambda_1 [1-{n \choose 1}\lambda_0].$

\

From the  inequalities $(1)$ and $(2)$ of $(1.12)$,
we obtain
$$1-\left [{n \choose 1}+x_{1}\right  ]\lambda_2 \geq 0,$$
by taking into account that for $1 \leq j \leq 2$,
$$
x_j <0.
$$
Thus, $$\lambda_2 \leq\frac{1}{{n \choose 1}+x_{1}}=\frac{1+2}{n+2}.$$
In general, recall that Lemma \ref{x1} states that for $2 \leq k \leq n$ and $x_{j}=-{n+(j-2) \choose j}\frac{k-j}{k}<0$, where $1 \leq j \leq k-1$, we have

$$\linespread{0.5}\selectfont
\left\{
\begin{array}{cc}
{n \choose 2}=-{n \choose 1}x_{1}+x_{2} \\
\\
-{n \choose 3}={n \choose 2}x_{1}-{n \choose 1}x_{2}+x_{3}\\
\\
\vdots\\
\\
(-1)^{j}{n \choose j}=(-1)^{j-1}{n \choose j-1}x_{1}+(-1)^{j-2}{n \choose j-2}x_{2}+\cdots+(-1)^{1}{n \choose 1}x_{j-1}+x_{j}\\
\\
\vdots\\
\\
(-1)^{k-1}{n \choose k-1}=(-1)^{k-2}{n \choose k-2}x_{1}+(-1)^{k-3}{n \choose k-3}x_{2}+\cdots+(-1)^{1}{n \choose 1}x_{k-2}+x_{k-1}\\
\\
(-1)^{k}{n \choose k}=(-1)^{k-1}{n \choose k-1}x_{1}+(-1)^{k-2}{n \choose k-2}x_{2}+\cdots+(-1)^{2}{n \choose 2}x_{k-2}+(-1)^{1}{n \choose 1}x_{k-1}\\
\end{array}.
\right.
$$
Then from inequality $(k)$ of (1.12), we have
{\footnotesize{\begin{eqnarray*}
&&1+\sum\limits_{j=1}^{k}(-1)^{j}{n \choose j}\lambda_{k-1}\lambda_{k-2}\cdots\lambda_{k-j}\\
&=&1-{n \choose 1}\lambda_{k-1}+
\sum\limits_{j=2}^{k-1}\left[\sum\limits_{l=0}^{j-1}(-1)^{l}{n \choose l}x_{j-l} \right]\lambda_{k-1}\lambda_{k-2}\cdots \lambda_{k-j}
+\left[\sum\limits_{j=1}^{k-1}(-1)^{j}{n \choose j}x_{k-j}\right]\prod\limits_{l=1}^{k}\lambda_{k-l}\\
&=&1-\left[{n \choose 1}+x_{1}\right]\lambda_{k-1}
+\sum\limits_{j=1}^{k-1}
\left[1+\sum\limits_{l=1}^{k-j}(-1)^{l}{n \choose l}\lambda_{k-j-1}\lambda_{k-j-2}\cdots\lambda_{k-j-l}\right]x_{j}\lambda_{k-1}\lambda_{k-2}\cdots\lambda_{k-j}\\
&\geq&0.
\end{eqnarray*}}}
Now based on the inequalities $(1)$ through $(k)$ of $(1.12)$, we have for every $1 \leq m \leq k$, $$1+\sum\limits_{j=1}^{m}(-1)^{j}{n \choose j}\lambda_{m-1}\lambda_{m-2}\cdots\lambda_{m-j}\geq0,$$
and therefore, using the fact that $x_j < 0$ for every $1 \leq j \leq k-1$, the inequality
$$1-\left[{n \choose 1}+x_{1}\right]\lambda_{k-1}\geq0,$$
follows. Then for every $2 \leq k \leq n$, $$\lambda_{k-1}\leq\frac{1}{{n \choose 1}+x_{1}}=\frac{1+(k-1)}{n+(k-1)}.$$ Since it has been observed already that $\lambda_0 \leq \frac{1+0}{n+0}$, the inequality holds for all $1 \leq k \leq n$.

\

For $n+1 \leq k$, we make use of Lemma 1.3 that states that for $$x_{j}=-{n+(j-2) \choose j}\frac{n+m-j}{n+m},$$ with $1 \leq j \leq n+m-1$ and $n, m \geq 2$, one has
$$
{\linespread{0.5}\selectfont
\left\{
\begin{array}{cc}
{n \choose 2}=-{n \choose 1}x_{1}+x_{2}\\
\\
-{n \choose 3}={n \choose 2}x_{1}-{n \choose 1}x_{2}+x_{3}\\
\\
\vdots\\
\\
(-1)^{j+1}{n \choose j+1}=(-1)^{j}{n \choose j}x_{1}+(-1)^{j-1}{n \choose j-1}x_{2}+\cdots+(-1)^{1}{n \choose 1}x_{j}+x_{j+1}\\
\\
\vdots\\
\\
(-1)^{n}{n \choose n}=(-1)^{n-1}{n \choose n-1}x_{1}+(-1)^{n-2}{n \choose n-2}x_{2}+\cdots+(-1)^{1}{n \choose 1}x_{n-1}+x_{n}\\
\\
0=(-1)^{n}{n \choose n}x_1+(-1)^{n-1}{n \choose {n-1}}x_2+\cdots +(-1)^{1}{n \choose 1}x_{n}+x_{1+n}
\\
\vdots\\

\\
0=(-1)^{n}{n \choose n}x_{i}+(-1)^{n-1}{n \choose n-1}x_{i+1}+\cdots+(-1)^{1}{n \choose 1}x_{i+n-1}+x_{i+n}\\
\\
\vdots\\
\\
0=(-1)^{n}{n \choose n}x_{m}+(-1)^{n-1}{n \choose n-1}x_{m+1}+\cdots+(-1)^{1}{n \choose 1}x_{n+m-1}
\end{array}.
\right.}
$$

\

Now, by inequality $(n+m)$ in $(1.12),$ we have

$\footnotesize{\begin{array}{llll} \,&1+\sum\limits_{j=1}^{n}(-1)^{j}{n \choose j}\lambda_{n+m-1}\lambda_{n+m-2}\cdots\lambda_{n+m-j}\\
=&1-{n \choose 1}\lambda_{n+m-1}+
\sum\limits_{j=2}^{n}\left [\sum\limits_{l=0}^{j-1}(-1)^{l}{n \choose l}x_{j-l}\right ]\lambda_{n+m-1}\lambda_{n+m-2}\cdots \lambda_{n+m-j}\\
\,&+\sum\limits_{j=1}^{m-1}\left [\sum\limits_{l=0}^{n}(-1)^{l}{n \choose l}x_{n+j-l}\right ]\lambda_{n+m-1}\lambda_{n+m-2}\cdots\lambda_{m-j}+
\left[\sum\limits_{j=1}^{n}(-1)^{j}{n \choose j}x_{n+m-j}\right]\prod\limits_{l=1}^{n+m}\lambda_{l-1}\\
=&1-\left[{n \choose 1}+x_{1}\right]\lambda_{n+m-1}\\
&+\, \sum\limits_{j=1}^{m}
\left(\left[1+\sum\limits_{l=1}^{n}(-1)^{l}{n \choose l}\lambda_{n+m-j-1}\lambda_{n+m-j-2}\cdots \lambda_{n+m-j-l}\right]x_{j}\lambda_{n+m-1}
\cdots\lambda_{n+m-j}\right)\\
&+
\sum\limits_{j=m+1}^{n+m-1}\left(
\left[1+\sum\limits_{l=1}^{n+m-j}(-1)^{l}{n \choose l}\lambda_{n+m-j-1}\cdots\lambda_{n+m-j-l}\right]x_{j}\lambda_{n+m-1}
\cdots\lambda_{n+m-j}\right)\\
\geq&0.
\end{array}}$

\

Again, the inequalities in $(1.12)$ give for all $1 \leq k \leq n$,
$$
1+\sum\limits_{j=1}^{k}(-1)^{j}{n \choose j}\lambda_{k-1}\lambda_{k-2}\cdots\lambda_{k-j}\geq0,
$$
and for all $l \geq 0$,
$$
1+\sum\limits_{j=1}^{n}(-1)^{j}{n \choose j}\lambda_{n+l-1}\lambda_{n+l-2}\cdots\lambda_{n+l-j}\geq0.
$$
Since $$x_{j}=-{n+(j-2) \choose j}\frac{n+m-j}{n+m}<0,$$ for $1 \leq j \leq n+m-1$,
we conclude that
$$1-\left[{n \choose 1}+x_{1} \right]\lambda_{n+m-1} \geq 0.$$

Thus, $$\lambda_{n+m-1}\leq\frac{1}{{n \choose 1}+x_{1}}=\frac{1+(n+m-1)}{n+(n+m-1)},$$ for every $m \geq 2$, and we then have
 $$\lambda_j =\frac{w_{j+1}}{w_j} \leq \frac{1+j}{n+j},$$ for every nonnegative integer $j$.
\end{proof}

Theorem \ref{main} readily yields the following results. Recall that a \emph{subnormal} operator is an operator with a normal extension.

\begin{cor}
A weighted backward shift operator cannot be subnormal.
\end{cor}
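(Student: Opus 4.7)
The plan is to combine Theorem~\ref{main} with the classical fact that every subnormal contraction is an $n$-hypercontraction for every $n \geq 1$, and then to let $n \to \infty$ to force the weights to be non-positive.

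Suppose, for contradiction, that the backward shift $T$ on $H^2_w$ satisfies the hypotheses of Theorem~\ref{main} and is subnormal. The condition $\sup_j w_{j+1}/w_j < \infty$ guarantees that $T$ is bounded, so I may set $c = 1/\|T\| > 0$ and pass to $cT$, which is a contraction. Subnormality is preserved under scalar multiplication: if $N$ is a normal extension of $T$ to some larger Hilbert space $\mathcal{K} \supseteq \mathcal{H}$, then $cN$ is a normal extension of $cT$. Hence $cT$ is a subnormal contraction.

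The next step is the standard observation that any subnormal contraction $S$, with normal extension $M$ of norm at most one, is an $n$-hypercontraction for every $n \geq 1$. Since $M$ and $M^*$ commute,
$$\sum_{j=0}^k (-1)^j \binom{k}{j} M^{*j} M^j = (I - M^*M)^k \geq 0,$$
and pairing with vectors in the invariant subspace $\mathcal{H}$ transfers the positivity to $S$. Applied to $S = cT$, this shows that $cT$ is an $n$-hypercontraction for every $n \geq 1$.

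I then apply Theorem~\ref{main} to $cT$. The operator $cT$ is a weighted backward shift whose weight-ratio sequence is $c^2 \lambda_j$, where $\lambda_j = w_{j+1}/w_j$. The proof of Theorem~\ref{main} uses only that the operator is a bounded weighted backward shift with positive weights satisfying the $n$-hypercontractivity inequalities, so the same argument yields
$$c^2 \lambda_j \leq \frac{1+j}{n+j}$$
for every $j \geq 0$ and every $n \geq 1$. Letting $n \to \infty$ with $j$ fixed gives $c^2 \lambda_j \leq 0$, which contradicts $c > 0$ and $\lambda_j > 0$, and the corollary follows.

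The only subtlety I anticipate is that $cT$ corresponds to the weighted space $H^2_{w'}$ with $w'_j = c^{2j} w_j$, which need not satisfy $\liminf_j |w'_j|^{1/j} = 1$. This is cosmetic rather than substantive: the argument of Theorem~\ref{main} proceeds purely through the matrix inequalities encoding $n$-hypercontractivity together with the positivity of the $\lambda_j$'s, and never uses the analyticity normalization of $\{w_j\}$.
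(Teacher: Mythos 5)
Your proof is correct and follows essentially the same route as the paper's: rescale to a subnormal contraction, use Agler's characterization of subnormal contractions as $n$-hypercontractions for all $n$ (you prove the needed direction directly via $(I-M^*M)^k\geq 0$), apply Theorem~\ref{main} for every $n$, and let $n\to\infty$ to contradict the positivity of the weight ratios. Your closing remark --- that the rescaled weights need not satisfy $\liminf_j|w_j'|^{1/j}=1$ but that the proof of Theorem~\ref{main} never uses this normalization, only the hypercontractivity inequalities and the positivity of the $\lambda_j$ --- is a valid point that the paper's own proof glosses over.
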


\begin{proof} It is known that an operator is an $n$-hypercontraction for all $n$ if and only if it is a subnormal contraction (\cite{Agler2}).  Let $T$ be the backward shift operator on one of the spaces $H^2_w$ with weight sequence $\left \{\sqrt{\frac{w_{j+1}}{w_j}} \right \}_{j=0}^{\infty}$ and let it be subnormal. Since subnormality is preserved under the scalar multiplication operation, we can assume without generality that $\|T\| \leq 1$. Then by Theorem \ref{main}, for any fixed integer $j \geq 0$, we have for every integer $n \geq 1$, $\frac{w_{j+1}}{w_j} \leq \frac{1+j}{n+j}$. Since $\lim\limits_{n\rightarrow \infty} {\frac{1+j}{n+j}}=0,$ for every integer $j \geq 0$, $$\limsup_j \frac{w_{j+1}}{w_j}=0,$$ which is a contradiction to
 $\liminf_j |w_j|^{\frac{1}{j}}=1$.

\end{proof}

Next, let us recall how given an integer $n \geq 1$, the Hilbert space $\mathcal{M}_n$ of functions on the unit disk $\mathbb{D}$ is defined:
$$
\mathcal{M}_n= \{ f= \sum_{k=0}^{\infty} \hat{f}(k)z^k: \sum_{k=0}^{\infty} |\hat{f}(k)|^2 \frac{1}{{n+k-1
\choose k}} < \infty \}.
$$

Using the proof of Theorem \ref{main}, one can also show that the backward shift operator $S^*_n$ on $\mathcal{M}_n$ is ``almost" $n$-isometric''.

\begin{cor} Set $\mathcal{M}^1_n=\bigvee \{z^m: m \geq 1\}\subset \mathcal{M}_n$ and denote by $\mathcal{P}_n^1$ the orthogonal projection from
$\mathcal{M}_n $ to $\mathcal{M}^1_n$. Then
$$\mathcal{P}_n^1 \left (\sum_{j=0}^n (-1)^j{n \choose j}(S_n)^j(S^*_n)^j \right )\bigg \vert_{\mathcal{M}^1_n}=0.$$
\end{cor}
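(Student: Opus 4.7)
The plan is as follows. First, I identify the weights of $\mathcal{M}_n$: since $\|z^k\|^2=1/\binom{n+k-1}{k}$, the Remark gives weight ratios
$$
\lambda_k:=\frac{w_{k+1}}{w_k}=\frac{k+1}{n+k},
$$
which is exactly the extremal value of Theorem \ref{main}. Consequently each $(S_n)^j(S_n^*)^j$ is diagonal in the monomial basis, with $(S_n)^j(S_n^*)^j z^k=\mu_{j,k}\, z^k$, where $\mu_{j,k}=\prod_{l=k-j}^{k-1}\lambda_l$ for $j\le k$ (and $0$ for $j>k$).

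Next I would convert $\mu_{j,k}$ into a clean ratio of binomial coefficients by expanding the product of $(l+1)/(n+l)$ into factorials. This yields
$$
\mu_{j,k}=\frac{\binom{n+k-1-j}{n-1}}{\binom{n+k-1}{n-1}}, \qquad 0\le j\le k,
$$
and the standard convention $\binom{a}{n-1}=0$ for $0\le a<n-1$ keeps the same formula valid for $k<j\le n$. Setting $A:=\sum_{j=0}^{n}(-1)^j\binom{n}{j}(S_n)^j(S_n^*)^j$, this gives
$$
A\, z^k=\frac{1}{\binom{n+k-1}{n-1}}\left(\sum_{j=0}^{n}(-1)^j\binom{n}{j}\binom{n+k-1-j}{n-1}\right)z^k
$$
for every $k\ge 0$.

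Finally, the claim reduces to the binomial identity
$$
\sum_{j=0}^{n}(-1)^j\binom{n}{j}\binom{n+k-1-j}{n-1}=0 \qquad (k\ge 1),
$$
which I would prove by a one-line generating-function argument: $(-1)^j\binom{n}{j}$ is the coefficient of $x^j$ in $(1-x)^n$, $\binom{n+k-1-j}{n-1}$ is the coefficient of $x^{k-j}$ in $(1-x)^{-n}$, and the coefficient of $x^k$ in $(1-x)^n(1-x)^{-n}=1$ is $0$ for $k\ge 1$. Hence $A\, z^k=0$ for every $k\ge 1$, so $A\big\vert_{\mathcal{M}_n^1}=0$ and in particular $\mathcal{P}_n^1 A\big\vert_{\mathcal{M}_n^1}=0$. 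No serious obstacle arises; the essential point is recognizing that the extremal choice $\lambda_k=(k+1)/(n+k)$ turns the inequalities $(k)$ appearing in $(1.12)$ into equalities, and that the binomial identification of $\mu_{j,k}$ sidesteps the more intricate induction of Lemmas \ref{x1} and \ref{x2}.
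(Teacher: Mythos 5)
Your proof is correct. Since the paper states this corollary without proof (it only gestures at ``using the proof of Theorem \ref{main}''), the intended argument is presumably to observe that the weights of $\mathcal{M}_n$ realize equality $\lambda_k=\tfrac{k+1}{n+k}$ in Theorem \ref{main} and then to re-run the chain of manipulations built on Lemmas \ref{x1} and \ref{x2}, checking that every inequality $(k)$ of $(1.12)$ for $k\geq 1$ becomes an equality. You bypass all of that: after noting that $\sum_{j=0}^{n}(-1)^j\binom{n}{j}(S_n)^j(S_n^*)^j$ is diagonal in the monomial basis with $z^k$-entry $\binom{n+k-1}{n-1}^{-1}\sum_{j=0}^{n}(-1)^j\binom{n}{j}\binom{n+k-1-j}{n-1}$, you kill every entry with $k\geq 1$ by the single Chu--Vandermonde-type identity coming from $(1-x)^n(1-x)^{-n}=1$. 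This is the same generating-function mechanism the paper uses for identities \eqref{first} and \eqref{endfirst} (there phrased via $(1+x)^n(1+x)^{-(n-1)}=1+x$), but applied in the form directly adapted to the kernel $(1-\bar wz)^{-n}$, which is why no induction or auxiliary solutions $x_j$ are needed. Your computation also makes transparent why the projection $\mathcal{P}_n^1$ only serves to discard the $k=0$ entry (which equals $1$): the operator is diagonal, so it already preserves $\mathcal{M}_n^1$ and in fact vanishes on it. Two minor points worth recording: the convention $\binom{a}{n-1}=0$ for $0\leq a<n-1$ is indeed all that is needed, since for $k\geq 1$ and $j\leq n$ the upper index $n+k-1-j$ is nonnegative (the problematic value $-1$ occurs only at $k=0$, which is excluded); and the verification that $\mu_{j,k}=\binom{n+k-1-j}{n-1}\big/\binom{n+k-1}{n-1}$ is a one-line factorial cancellation, as you indicate. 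No gaps.
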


In addition, we construct in the next corollary a weighted space whose backward shift operator satisfies an inequality involving curvatures with respect to the operator $S^*_n$ on $\mathcal{M}_n$. This inequality looks almost the same as the one that appears in the similarity criteria but one can no longer say anything about subharmonicity. The following well-known result by A. L. Shields that helps determine when two weighted shift operators are similar will be used in one part of the proof.

\begin{lm}[\cite{SH}]\label{SH}

Let $T_1$ and $T_2$ be unilateral shifts with weight sequences $\{\lambda_j\}_{j=0}^{\infty}$ and $\{\tilde{\lambda}_j\}_{j=0}^{\infty}$, respectively. Then $T_1$ and $T_2$ are similar if and only if there exist positive constants $C_1$ and $C_2$ such that
$$0<C_1\leq \Big |\frac{\lambda_k\cdots \lambda_j}{\tilde{\lambda}_k\cdots \tilde{\lambda}_j} \Big | \leq C_2,$$
for all $k\leq j$.

\end{lm}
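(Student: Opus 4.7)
The plan is to handle the two implications separately, exploiting the rigid structure that the commutation relation with a weighted shift imposes on an operator written in the canonical orthonormal basis $\{e_n\}_{n\ge 0}$ (on which $T_1 e_n = \lambda_n e_{n+1}$ and $T_2 e_n = \tilde\lambda_n e_{n+1}$). For the sufficiency direction, I would construct an explicit similarity that is \emph{diagonal} in this basis. Setting $D e_n = d_n e_n$, the intertwining relation $D T_1 = T_2 D$ becomes the one-term recursion $\lambda_n d_{n+1} = \tilde\lambda_n d_n$, whose solution is $d_n = d_0 \prod_{k=0}^{n-1} \tilde\lambda_k/\lambda_k$. Taking $d_0 = 1$ and applying the hypothesis with $k=0$ would give positive constants bounding $|d_n|$ above and below, so $D$ is bounded with bounded inverse, yielding $T_2 = D T_1 D^{-1}$.

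For necessity, the plan is to start from an invertible $S$ with $S T_1 = T_2 S$ and extract the weight bounds from matrix entries. Writing $S = (s_{ij})$ in the canonical basis, comparison of coefficients on the two sides of the intertwining relation would produce the two-index recursion $\lambda_j\, s_{i, j+1} = \tilde\lambda_{i-1}\, s_{i-1, j}$ for $i\ge 1$ together with the boundary condition $s_{0, j+1} = 0$ for every $j\ge 0$. Iterating the recursion down to the first column would show that $s_{ij} = 0$ whenever $j > i$ (so $S$ is lower triangular) and that the diagonal entries satisfy $s_{nn} = s_{00}\prod_{k=0}^{n-1}\tilde\lambda_k/\lambda_k$. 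Since $S^{-1}$ satisfies the opposite intertwining relation $S^{-1} T_2 = T_1 S^{-1}$, the same analysis would force $S^{-1}$ to be lower triangular with $(S^{-1})_{nn} = (S^{-1})_{00}\prod_{k=0}^{n-1}\lambda_k/\tilde\lambda_k$. Because both factors are lower triangular, the identity $(S^{-1}S)_{nn} = 1$ would collapse to the single term $(S^{-1})_{nn}\, s_{nn} = 1$, forcing $s_{00}\ne 0$ and $(S^{-1})_{nn} = 1/s_{nn}$. Combining $|s_{nn}|\le\|S\|$ with $|s_{nn}| \ge 1/\|S^{-1}\|$ would then provide uniform two-sided bounds on $\prod_{k=0}^{n-1}|\tilde\lambda_k/\lambda_k|$, and expressing any tail product $\prod_{i=k}^{j}|\lambda_i/\tilde\lambda_i|$ as the quotient of two such initial products would upgrade this to the bound appearing in the statement.

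The main obstacle I anticipate is the bookkeeping in the necessity argument: each individual matrix manipulation is elementary, but one must track the lower triangular structure of \emph{both} $S$ and $S^{-1}$ in order to reduce the sum defining $(S^{-1}S)_{nn}$ to a single nonzero term. The cleanest route is to first derive the lower triangular form of $S$ purely from the intertwining recursion and the boundary condition $s_{0,j+1}=0$, and only then to invoke invertibility to conclude $s_{00}\ne 0$ and extract the desired inequality from the operator norm bounds; once $s_{nn}$ is identified as the reciprocal of the corresponding diagonal entry of $S^{-1}$, the rest is a direct comparison of constants.
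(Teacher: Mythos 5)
The paper offers no proof of this lemma --- it is quoted directly from Shields' survey \cite{SH} --- and your proposal is a correct, complete reconstruction of the standard argument: the diagonal intertwiner $d_n=\prod_{k=0}^{n-1}\tilde\lambda_k/\lambda_k$ for sufficiency, and for necessity the lower-triangularity of both $S$ and $S^{-1}$ forced by the intertwining recursion, which collapses $(S^{-1}S)_{nn}=1$ to $(S^{-1})_{nn}s_{nn}=1$ and yields two-sided bounds on $|s_{nn}|=|s_{00}|\prod_{k=0}^{n-1}|\tilde\lambda_k/\lambda_k|$ via $\|S\|$ and $\|S^{-1}\|$, after which passing from initial products to tail products is just division. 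The one hypothesis you use silently is that all weights are nonzero (needed to divide by $\lambda_j$ in the recursion and to conclude $s_{0,j+1}=0$); this injectivity assumption is implicit in Shields' formulation and automatic in the paper's application, where the weights are $\sqrt{w_{j+1}/w_j}>0$.
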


\begin{cor}\label{maincor} For each operator $S^*_n$ on $\mathcal{M}_n$, there exist a weighted backward shift operator $T$ that is not an $n$-hypercontraction and a positive, bounded, real-analytic function $\psi$ defined on the unit disk $\mathbb{D}$ such that $$\partial\bar{\partial}  \psi(w)=\mathcal{K}_{S^*_n}(w)-\mathcal{K}_T(w),$$ for every $w \in \mathbb{D}$. Moreover, $T$ is not similar to $S^*_n$.
\end{cor}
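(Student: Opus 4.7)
The plan is to exhibit $(T,\psi)$ with the desired properties by prescribing the reproducing kernel of $T$ directly. Since for a rank-one operator in $B_1(\mathbb{D})$ one has $\mathcal{K}_T(w)=-\partial\bar\partial\log k_T(w)$, where $k_T(w)=\sum_{j\ge 0}|w|^{2j}/w_j$ is the diagonal of the reproducing kernel of $H^2_w$, the identity $\partial\bar\partial\psi=\mathcal{K}_{S^*_n}-\mathcal{K}_T$ integrates---modulo a pluri-harmonic correction which we take to be a constant---to
$$k_T(w)=\frac{e^{\psi(w)}}{(1-|w|^2)^n}.$$
I would select a positive, bounded, real-analytic radial function $\psi\colon\mathbb{D}\to\mathbb{R}$ so that (a) the Taylor coefficients $a_j$ of $k_T$ in $|w|^2$ are all positive, and (b) the resulting weights $w_j:=1/a_j$ give a bounded backward shift on $H^2_w$ that is not $n$-hypercontractive and not similar to $S^*_n$. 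Then $T$ is defined to be this backward shift and the curvature identity holds by construction.

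Verification proceeds in three steps. First, check that $\sup_j w_{j+1}/w_j<\infty$ and $\liminf_j w_j^{1/j}=1$ from the Taylor data of $e^{\psi(w)}/(1-|w|^2)^n$, which places $T$ in the framework of Theorem~\ref{main}. Second, apply the contrapositive of Theorem~\ref{main}: it suffices to exhibit a single index $j_0$ at which the weight ratio $w_{j_0+1}/w_{j_0}$ exceeds the sharp bound $(j_0+1)/(n+j_0)$; because $e^\psi$ is a nontrivial factor, the smallest index $j_0=0$ should already succeed for generic $\psi$, and this is a direct computation. Third, invoke Shields' criterion (Lemma~\ref{SH}) and show that the partial products $\prod_{i=k}^{j}\sqrt{\lambda_i^T/\lambda_i^{S^*_n}}$, with $\lambda_i^T=w_{i+1}/w_i$ and $\lambda_i^{S^*_n}=(i+1)/(n+i)$, are unbounded as $0\leq k\leq j$ vary, thereby proving $T\not\sim S^*_n$.

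The main obstacle is this last step, because the boundedness of $\psi$ pins the ratio $k_T(w)/k_{S^*_n}(w)=e^{\psi(w)}$ into a bounded interval on $\mathbb{D}$, and such a global comparability of reproducing kernels tends, via an Abelian or saddle-point estimate on the $|w|^{2j}$-coefficients, to force $a_j\asymp\binom{n+j-1}{j}$ and hence $w_j\asymp w_j^{S^*_n}$---which would give $T\sim S^*_n$ by Lemma~\ref{SH} and defeat the corollary. A na\"\i{}ve choice like $\psi(w)=\log(2-|w|^2)$ illustrates this: it yields $a_j=\binom{n+j-1}{j}+\binom{n+j-2}{j}$ with $a_j/\binom{n+j-1}{j}\in[1,2]$, forcing similarity. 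The technical heart of the argument is therefore the precise engineering of $\psi$ so that $e^\psi$ does not admit a nonzero radial boundary limit (for instance, $\psi$ should contain a bounded but non-convergent oscillatory tail near $\partial\mathbb{D}$), in such a way that the coefficients $a_j$ have bounded averaged behaviour but unbounded multiplicative fluctuations in $j$---breaking the Shields partial-product bounds while keeping $\psi$ bounded. Carrying out this balancing and then verifying the three checks explicitly constitutes the proof.
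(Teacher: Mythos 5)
Your setup is correct and you have accurately located the difficulty, but the proposal stops exactly where the proof has to begin. The corollary is an existence statement, and everything hinges on producing one concrete object satisfying several competing constraints at once: $k_T(w)(1-|w|^2)^n$ bounded above and below by positive constants (so that $\psi=\log\bigl[k_T(w)(1-|w|^2)^n\bigr]$, plus a constant, is positive, bounded and real-analytic); all coefficients $a_j=1/w_j$ positive with $\limsup_j a_j^{1/j}=1$ and $\sup_j w_{j+1}/w_j<\infty$; some ratio $w_{j_0+1}/w_{j_0}$ exceeding $(1+j_0)/(n+j_0)$; and unbounded Shields partial products against $\tilde w_j=j!(n-1)!/(n+j-1)!$. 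You defer all of this to ``the precise engineering of $\psi$'' with an unspecified ``oscillatory tail,'' and it is not clear that prescribing $\psi$ first can ever deliver the package: once a bounded real-analytic $\psi$ is fixed you have no direct control over the signs of the Taylor coefficients of $e^{\psi}/(1-|w|^2)^n$, let alone over the multiplicative fluctuations of the $a_j$ needed to defeat Lemma~\ref{SH}; your own ``naive choice'' paragraph shows that the generic outcome of this strategy is similarity. So as written there is a genuine gap: the object whose existence is asserted is never constructed, and the construction is the entire content of the corollary.

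The paper resolves the tension by running the construction in the opposite direction: it prescribes the weights, not $\psi$. One takes $w_j=\tilde w_j$ except on sparse blocks around indices $N_i$, where $w_{N_i+l}=l\,\tilde w_{N_i+l}$ rises to a bump of height $i$ and comes back down symmetrically. The weight ratios are then read off directly: at $n_0=N_j+j-1$ one gets $w_{n_0+1}/w_{n_0}=\frac{j}{j-1}\cdot\frac{N_j+j}{n+N_j+j-1}>\frac{1+n_0}{n+n_0}$, so $T$ is not an $n$-hypercontraction by the contrapositive of Theorem~\ref{main}, and the telescoped partial product $\tilde w_{N_j+j}/w_{N_j+j}=1/j$ escapes every two-sided bound, so $T\not\sim S^*_n$ by Lemma~\ref{SH}. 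The only thing left to estimate is the kernel: the perturbation $k_w(w)-(1-|w|^2)^{-n}$ splits into pieces $g_i$ supported on the blocks, and choosing $N_i$ large enough forces $|g_i(w)|(1-|w|^2)^n<2^{-i-2}$, whence $k_w(w)(1-|w|^2)^n\in(7/8,9/8)$. That is precisely the step your plan leaves open: a construction in which the comparability of the kernels is an estimate to be verified while the failure of coefficient comparability is built in by hand. To salvage your route you would essentially have to reverse-engineer the same kind of sparse bump sequence for the $a_j$, at which point you are reproducing the paper's proof.
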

\begin{proof} We will define our backward shift operator $T$ on some weighted space $$\mathcal{H}=\left \{ \sum_{j=0}^{\infty} a_jz^j: \sum_{j=0}^{\infty} |a_j|^2w_j < \infty \right \}.$$  The operator $S^*_n$ is  an $n$-hypercontraction and
using the reproducing kernel for the space $\mathcal{M}_n$, we have $$\mathcal{K}_{S^*_n}(w)=-\partial\overline{\partial}\log\frac{1}{(1-|w|^{2})^{n}}.$$
If we write $\mathcal{K}_T$ as $$\mathcal{K}_{T}(w)=-\partial\overline{\partial}\log k_{{w}}(w),$$ where $k_{{w}}(z)=\sum\limits_{j=0}^{\infty}\frac{\overline{w}^jz^j}{w_j}$ denotes the reproducing kernel of $\mathcal{H}$, then
$$\mathcal{K}_{S^*_n}(w)-\mathcal{K}_{T}(w)=\partial\overline{\partial}\log\left [k_{{w}}(w){(1-|w|^{2})^{n}}\right ].$$ Hence, in order to prove that a positive, bounded, real-analytic function $\psi$ exists, we have to show that $k_{{w}}(w){(1-|w|^{2})^{n}}$ is bounded above and below by positive constants.


We first consider the sequence  $${\widetilde w}_{j}:=\frac{j!(n-1)!}{(n+j-1)!},$$
that appears in the following familiar expansion for $(1-x)^{-n}$:
$$
(1-x)^{-n}=\sum_{j=0}^{\infty}{ {n+j-1}\choose{j}} x^j.
$$

We now construct the sequence ${w}_j$ for the space $\mathcal{H}$. Let
$$w_{j}=
\begin{cases}
l{\widetilde w}_{j},\qquad j=N_{i}+l, \text{ for }1 \leq l \leq i,\\
l{\widetilde w}_{j},\qquad j=N_{i}+2i-l, \text{ for }1 \leq l \leq i,\\
{\widetilde w}_j,\qquad\qquad otherwise,
\end{cases}$$
where the sequence $\{N_i\}_{i \geq 1}$ consists of positive integers $N_i  > n-2$ with $$N_i+2i < N_{i+1}.$$ More details on the $N_i$ will be given later. Then, since $\frac{1}{w_{j}}-\frac{(n+j-1)!}{j!(n-1)} \neq 0$ only for $j=N_i+p$, where $2 \leq p \leq 2i-2$, we have

\begin{eqnarray*}k_{w}(w)&=&\sum\limits_{j=0}^{\infty}\frac{1}{w_{j}}(|w|^{2})^{j}\\
&=&\frac{1}{(1-|w|^{2})^{n}}+\sum\limits_{j=2}^{\infty}\left[\frac{1}{w_{j}}-\frac{(n+j-1)!}{j!(n-1)!}\right ](|w|^{2})^{j}\\
&=&\frac{1}{(1-|w|^{2})^{n}}+\sum\limits_{i=2}^{\infty} \sum\limits_{j=N_i+2}^{N_i+2i-2}\left[\frac{1}{w_{j}}-\frac{(n+j-1)!}{j!(n-1)!}\right ](|w|^{2})^{j}\\
&=:&\frac{1}{(1-|w|^2)^n}+\sum\limits_{i=2}^{\infty} g_i(w),
\end{eqnarray*}
where,
 \begin{eqnarray*}
|g_{i}(w)|=& \left | \sum\limits_{j=2}^{i}\left[\frac{1}{w_{N_{i}+j}}-\frac{(n+N_{i}+j-1)!}{(N_{i}+j)!(n-1)!}\right]|w|^{2N_{i}+2j}+\sum\limits_{j=2}^{i-1}\left[\frac{1}{w_{N_{i}+2i-j}}-\frac{(n+N_{i}+2i-j-1)!}{(N_{i}+2i-j)!(n-1)!}\right]|w|^{2N_{i}+4i-2j} \right | \\
=&\frac{(n+N_{i}-1)!}{N_{i}!(n-1)!}|w|^{2N_{i}}\left | \sum\limits_{j=2}^{i}\frac{(n+N_{i}+j-1)!N_{i}!}{(n+N_{i}-1)!(N_{i}+j)!}\left (\frac{1}{j}-1 \right)|w|^{2j}+\sum\limits_{j=2}^{i-1}\frac{(n+N_{i}+2i-j-1)!N_{i}!}{(n+N_{i}-1)!(N_{i}+2i-j)!}\left (\frac{1}{j}-1 \right)|w|^{4i-2j}\right |\\
\leq&\frac{(n+N_{i}-1)!}{N_{i}!(n-1)!}|w|^{2N_{i}}\left |\sum\limits_{j=2}^{i}2^{i}\left(\frac{1}{j}-1\right)|w|^{2j}+\sum\limits_{j=2}^{i-1}2^{i}\left (\frac{1}{j}-1\right)|w|^{4i-2j}\right |.
\end{eqnarray*}
Next, set
$$
M_i:=\sup  \limits_{|w|<1} \left |\sum\limits_{j=2}^{i}2^{i}\left(\frac{1}{j}-1\right)|w|^{2j}+\sum\limits_{j=2}^{i-1}2^{i}\left (\frac{1}{j}-1\right)|w|^{4i-2j}\right |,
$$
a constant that depends only on $i$ and not on the $N_i$. By direct calculation, one easily sees that  $$M_i<i2^{i+1}.$$
We then have $$k_{w}(w)(1-|w|)^{n}=1+\sum\limits_{i=2}^{\infty}g_{i}(w)(1-|w|^{2})^{n},$$
and $$|g_{i}(w)|(1-|w|^{2})^{n}\leq M_{i}\frac{(n+N_{i}-1)!}{N_{i}!(n-1)!}(|w|^{2})^{N_{i}}(1-|w|^{2})^{n}.$$

Now, for $x \in \mathbb{D}$, if we let $$f(x):=x^{N_{i}}(1-x)^{n},$$
then $$f'(x)=N_{i}x^{N_{i}-1}(1-x)^{n}-nx^{N_{i}}(1-x)^{n-1}=x^{N_{i}-1}(1-x)^{n-1}\left[N_{i}(1-x)-nx \right].$$
Since the function $f(x)$ attains a maximum of $\left (\frac{N_{i}}{N_{i}+n} \right)^{N_{i}}\left (\frac{n}{N_{i}+n} \right)^{n}$ at $x=\frac{N_{i}}{N_{i}+n},$
$$|g_{i}(w)|(1-|w|^{2})^{n}\leq M_{i}\frac{(n+N_{i}-1)!}{N_{i}!(n-1)!}\left (\frac{N_{i}}{N_{i}+n} \right )^{N_{i}}\left (\frac{n}{N_{i}+n} \right )^{n}
\leq M_{i}\frac{n^{n}}{(n-1)!(N_{i}+n)}.$$

Now if we choose
 $$N_{i}>max \left [\frac{2^{i+2}M_{i}n^{n}}{(n-1)!}-n, n-2 \right],$$
 then $$|g_{i}(w)|(1-|w|^{2})^{n}\leq M_{i}\frac{n^{n}}{(n-1)!(N_{i}+n)}<\frac{1}{2^{i+2}}.$$
Notice that  since $M_i<i2^{i+1},$ one could have chosen $N_i=\frac{i2^{2i+3}n^n}{(n-1)!}.$  Furthermore, it can be shown that $N_i+2i<N_{i+1}.$ Thus, we have that
$$\frac{7}{8} < k_{w}(w)(1-|w|^{2})^{n} <\frac{9}{8},$$
and therefore, $k_{w}(w)(1-|w|^2)^n$ is indeed bounded by positive constants.

To show that $T$ is not an $n$-hypercontraction, we note the existence of some $n_{0}=N_{j}+j-1$ such that $${\frac{{w}_{n_{0}+1}}{{w}_{n_{0}}}}={\frac{j \widetilde{w}_{N_{j}+j}}{(j-1) \widetilde{w}_{N_{j}+j-1}}}={\frac{j}{j-1}}{\frac{N_{j}+j}{n+N_{j}+j-1}}>{\frac{1+n_0}{n+n_0}},$$ and apply Theorem \ref{main}.

Lastly, to show that $T$ and $S^*$ are not similar, we choose $n_0=N_{j}+j-1$ as in the previous case to get $$\frac{\prod\limits_{k=0}^{n_0}{\frac{\widetilde{w}_{k+1}}{\widetilde{w}_{k}}}}{\prod\limits_{k=0}^{n_0}{\frac{{w}_{k+1}}{{w}_{k}}}}
=\frac{\prod\limits_{k=0}^{N_{j}+j-1}{\frac{\widetilde{w}_{{k}+1}}{\widetilde{w}_{k}}}}{\prod\limits_{k=0}^{N_{j}+j-1}{\frac{{w}_{k+1}}{{w}_{k}}}}
={\frac{\widetilde{w}_{N_{j}+j}}{{w}_{N_{j}+j}}}={\frac{j{w}_{N_{j}+j}}{w_{N_{j}+j}}}={j}\rightarrow+\infty,$$
as $j\rightarrow+\infty$, and the conclusion follows from Lemma \ref{SH}.

\end{proof}

\section{  Trace of curvature and similarity of reducible operators in $B_m(\Omega)$. }

In this section, we give  a simple example to show that the $n$-hypercontraction assumption is needed to determine the similarity of operators in $B_m(\Omega)$ in terms of the trace of the curvatures as was claimed in \cite{HJK}. We first introduce some
definitions and mention some results about strongly irreducible operators. We assume throughout the section that $T \in \mathcal{L}(\mathcal{H})$.

\begin{Definition} $T$ is said to be strongly irreducible (denoted str-irred.) if there is no
nontrivial idempotent in the commutant ${\mathcal
A}^{\prime}(T)$ of $T$, that is, $T$ cannot be written as $$T=T_1\stackrel{\cdot}{+}T_2,$$ for some $T_i\in \mathcal{L}({\mathcal H}_i)$, where $1 \leq i \leq 2$, and
${\mathcal H}={\mathcal H}_1\stackrel{.}{+}{\mathcal H}_2.$

\end{Definition}

\begin{Definition}[\cite{CFJ}] Let $n < \infty$.
 A set ${\mathcal P}=\{P_i\}_{i=1}^{n}$ of idempotents in ${\mathcal L}({\mathcal H})$  is called a unit finite
decomposition of $T$ if

$1.\,\, P_i{\in} {\mathcal A}^{\prime}(T)$ for all $1{\leq}i{\leq}n$;

$2.\,\, P_iP_j={\delta}_{ij}P_i$  for all $1{\leq}i, j{\leq}n,$ where
${\delta}_{ij}=\left\{
\begin{array}{cc}
1 & i=j\\
0 & i \neq j
\end{array}
\right. $; and

$3. \,\, \sum\limits_{i=1}^{n}P_i=I_{\mathcal H},$ where $I_{\mathcal H}$
denotes the identity operator on $\mathcal H$.

If, in addition,

$4. \,\, P_i$ is a minimal idempotent in ${\mathcal A}^{\prime}(T)$, that is , $T|_{\ran P_i} \text{ is str-irred. for}$ $1 \leq i \leq n,$

then ${\mathcal P}$ is said to be a unit finite strong irreducible decomposition of $T$
and we call the cardinality of ${\mathcal P}$ the strong irreducible cardinality of
$T$.

\end{Definition}

It is clear that an operator $T$ has a unit
finite strong irreducible decomposition if and only if it can be expressed as
the direct sum of finitely many strongly irreducible operators.

\begin{Definition}[\cite{CFJ}]\label{UD}  Let ${\mathcal P}=\{P_i\}_{i=1}^{m}$ and ${\mathcal
Q}=\{Q_i\}_{i=1}^{n}$ be two unit finite strong irreducible decompositions of
$T$. We say that $T$ has a unique strong irreducible decomposition up to similarity
if $m=n$ and there exist an invertible operator $X \in {\mathcal
A}^{\prime}(T)$ and a permutation ${\Pi}$ of the set $(1, 2, \cdots,
n)$ such that $XQ_{{\Pi}(i)}X^{-1}=P_i$ for all $1{\leq}i{\leq}n.$

\end{Definition}

The work of the first author, in collaboration with X. Guo and C. Jiang, shows how this concept is related to Cowen-Douglas operators.

\begin{thm}[\cite {JGJ}] \label{JGJ} Let $T$ be a Cowen-Douglas operator and set $$T^{(n)}:=\bigoplus\limits^{n}_{i=1}T.$$ Then $ T^{(n)}$
has a unique strong irreducible decomposition up to similarity.
\end{thm}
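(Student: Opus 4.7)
The plan is to reduce the uniqueness question to a K-theoretic classification of minimal idempotents in the commutant algebra $\mathcal{A}'(T^{(n)})$. First, by earlier structure theory for Cowen-Douglas operators, $T$ itself admits a strong irreducible decomposition $T = T_1^{(k_1)} \oplus \cdots \oplus T_r^{(k_r)}$ with the $T_i$ strongly irreducible, mutually non-similar Cowen-Douglas operators. This gives existence for $T^{(n)}$ at once: $T^{(n)} = T_1^{(nk_1)} \oplus \cdots \oplus T_r^{(nk_r)}$ is a unit strong irreducible decomposition of cardinality $N := n(k_1+\cdots+k_r)$. What remains is to show that any other such decomposition matches this one, up to a permutation and a conjugation by an element of $\mathcal{A}'(T^{(n)})$.

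The structural input I would use is that for a strongly irreducible Cowen-Douglas operator $S$, the quotient $\mathcal{A}'(S)/\mathrm{rad}(\mathcal{A}'(S))$ is isomorphic to $\mathbb{C}$, and more generally that every intertwining operator between non-similar strongly irreducible Cowen-Douglas operators is quasinilpotent. These facts combine, via a block-matrix description of the commutant, to yield
\[
\mathcal{A}'(T^{(n)}) \big/ \mathrm{rad}(\mathcal{A}'(T^{(n)})) \;\cong\; M_{nk_1}(\mathbb{C}) \oplus \cdots \oplus M_{nk_r}(\mathbb{C}),
\]
so that $K_0(\mathcal{A}'(T^{(n)})) \cong \mathbb{Z}^r$ with the $i$-th coordinate recording the multiplicity along $T_i$. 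Given two unit strong irreducible decompositions $\{P_j\}_{j=1}^{m}$ and $\{Q_j\}_{j=1}^{m'}$ of $T^{(n)}$, each idempotent is minimal in $\mathcal{A}'(T^{(n)})$ and descends to a rank-one minimal idempotent in exactly one of the matrix blocks $M_{nk_i}(\mathbb{C})$. Since the K-classes of the $P_j$'s must sum to the K-class of the identity, which is $(nk_1, \ldots, nk_r) \in \mathbb{Z}^r$, and similarly for the $Q_j$'s, both collections split into $r$ groups with exactly $nk_i$ idempotents lying in the $i$-th block; in particular $m = m' = N$.

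To finish, within each block any two rank-one idempotents are conjugate by an element of $GL_{nk_i}(\mathbb{C})$, and standard lifting of idempotents through the Jacobson radical produces an invertible $X \in \mathcal{A}'(T^{(n)})$ realising this conjugation blockwise. Choosing the permutation $\Pi$ that matches the $Q_j$'s to the $P_j$'s block by block then yields $X Q_{\Pi(j)} X^{-1} = P_j$ for all $j$, which is precisely the condition in Definition \ref{UD}. The main obstacle is the identification of $\mathcal{A}'(T^{(n)})$ modulo its Jacobson radical with the matrix sum displayed above: forcing every cross-intertwiner between non-similar strongly irreducible Cowen-Douglas components to be quasinilpotent is a rigidity statement relying essentially on the holomorphic structure of the eigenvector bundle, and is the nontrivial analytic input. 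Once this semisimple quotient is identified, the remaining K-theoretic bookkeeping and idempotent lifting become algebraic formalities.
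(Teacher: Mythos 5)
First, a point of reference: the paper does not prove Theorem \ref{JGJ} at all --- it is imported verbatim from \cite{JGJ} --- so there is no in-paper argument to measure your proposal against, and I am comparing it with the proof in the cited reference (which rests on the idempotent-semigroup machinery of \cite{CFJ} that the paper does invoke later, in the proof of Proposition \ref{CFJ}). Your overall architecture --- decompose $T$ as $T_1^{(k_1)}\oplus\cdots\oplus T_r^{(k_r)}$ with the $T_i$ strongly irreducible and pairwise non-similar, pass to classes of idempotents in the commutant, count multiplicities against the class of the identity, and then conjugate --- is the correct skeleton and is essentially the one used in \cite{JGJ}.

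However, the analytic input you single out as the crux is false. It is not true that ${\mathcal A}^{\prime}(S)/\mathrm{rad}({\mathcal A}^{\prime}(S))\cong\mathbb{C}$ for a strongly irreducible Cowen--Douglas operator $S$: take $S=S_1^{*}$, the backward shift on the Hardy space, which lies in $B_1(\mathbb{D})$ and is strongly irreducible; its commutant is $\{M_{\varphi}^{*}:\varphi\in H^{\infty}\}$, a commutative semisimple Banach algebra isomorphic as a ring to $H^{\infty}$, so the quotient by the Jacobson radical is all of $H^{\infty}$, not $\mathbb{C}$. Likewise, products of intertwiners between non-similar strongly irreducible Cowen--Douglas operators are merely non-invertible; they need not be quasinilpotent, let alone lie in the radical. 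Consequently the identification ${\mathcal A}^{\prime}(T^{(n)})/\mathrm{rad}\cong M_{nk_1}(\mathbb{C})\oplus\cdots\oplus M_{nk_r}(\mathbb{C})$ fails, and with it the ``algebraic formalities'' of lifting idempotents through the radical and conjugating rank-one idempotents inside finite-dimensional matrix blocks. What actually carries the argument in \cite{JGJ} is the semigroup statement $\left(\bigvee({\mathcal A}^{\prime}(T)),+,[I]\right)\cong\left(\mathbb{N}^{r},+,(k_1,\ldots,k_r)\right)$, i.e.\ that every idempotent in $M_{\infty}({\mathcal A}^{\prime}(T))$ is algebraically equivalent to a diagonal one built from the $T_i$; this is established from the holomorphic picture (idempotents in the commutant correspond to holomorphic sub-bundle decompositions of the eigenvector bundle, whose restrictions are again Cowen--Douglas operators of smaller rank), and it can hold even when the commutant modulo its radical is infinite dimensional --- exactly as $\bigvee(H^{\infty})\cong\mathbb{N}$ and $K_0(H^{\infty})\cong\mathbb{Z}$ hold although $H^{\infty}$ is nothing like a matrix algebra. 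Once that semigroup isomorphism is in hand, the theorem of \cite{CFJ} converts it directly into unique strong irreducible decomposability of every power $T^{(n)}$. So the gap sits precisely where you flagged ``the nontrivial analytic input'': the rigidity statement you propose is not the correct one, and the correct one is not a statement about finite-dimensionality of the commutant modulo its Jacobson radical.
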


Denote by $M_{k}({\mathcal A}^{\prime}(T))$ the collection of all $k\times k$ matrices with entries from the commutant ${\mathcal A}^{\prime}(T)$ of an operator $T$.
Let
 $$M_{\infty}({\mathcal A}^{\prime}(T))=\bigcup\limits^{\infty}_{k=1}M_{k}({\mathcal
 A}^{\prime}(T)),$$
and let $\mbox{\rm Proj}(M_k({\mathcal A}^{\prime}(T)))$ be the
algebraic equivalence classes of idempotents in
$M_{\infty}({\mathcal A}^{\prime}(T))$. Set $\bigvee ({\mathcal A}^{\prime}(T))=\mbox{\rm Proj}(M_{\infty}({\mathcal A}^{\prime}(T))).$

 If $p$ and $q$ are idempotents in $\bigvee ({\mathcal A}^{\prime}(T))$, then we will say that $p{\sim}_{st}q$ if  $p{\oplus}r$ and $q{\oplus}r$ are algebraically equivalent for some idempotent $r \in \bigvee ({\mathcal A}^{\prime}(T))$. The relation ${\sim}_{st}$ is known as stable equivalence.  The $K_0$-group of ${\mathcal A}^{\prime}(T)$, denoted  $K_0({\mathcal A}^{\prime}(T))$, is defined to be the Grothendieck group of $\bigvee ({\mathcal A}^{\prime}(T))$.

Now recall that for $\alpha \geq 1$, $\mathcal{M}_{\alpha}$ is a Hilbert space with reproducing kernel given by $K_{\alpha}(z, w)=\frac{1}{(1-\bar{w}z)^{\alpha}}$ and with the backward shift operator $S^*_{\alpha}$. Lemma \ref{SH} above shows that the backward shift operators on two different spaces cannot be similar.

 \begin{lm}\label{ab} $S^*_{\alpha}$ and $S^*_{\beta}$ are similar if and only if
$\alpha=\beta.$

\end{lm}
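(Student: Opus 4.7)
The plan is to reduce the problem to Lemma \ref{SH} via adjoints. Since two operators are similar if and only if their adjoints are similar, $S^*_\alpha$ is similar to $S^*_\beta$ if and only if the forward shifts $S_\alpha$ on $\mathcal{M}_\alpha$ and $S_\beta$ on $\mathcal{M}_\beta$ are similar.

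First, I would read off the weight sequences from the reproducing kernel. Expanding
$$
(1-\bar{w}z)^{-\alpha} = \sum_{k=0}^{\infty}\binom{\alpha+k-1}{k}(\bar{w}z)^k
$$
shows that $\{z^k\}_{k\geq 0}$ is an orthogonal basis of $\mathcal{M}_\alpha$ with $\|z^k\|^2 = k!/(\alpha)_k$, where $(\alpha)_k=\alpha(\alpha+1)\cdots(\alpha+k-1)$. After normalizing to the orthonormal basis $e_k^{(\alpha)}:=z^k/\|z^k\|$, the forward shift $S_\alpha$ becomes a unilateral weighted shift with weight sequence $\lambda_k^{(\alpha)} = \sqrt{(k+1)/(\alpha+k)}$.

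Next, I would apply Lemma \ref{SH}, which says that $S_\alpha$ and $S_\beta$ are similar exactly when there exist constants $0 < C_1 \leq C_2 < \infty$ with
$$
C_1 \leq \prod_{i=k}^{j}\sqrt{\frac{\beta+i}{\alpha+i}} \leq C_2,
$$
for all $0 \leq k \leq j$. Fixing $k=0$ and letting $j\to\infty$, the product telescopes into a ratio of Pochhammer symbols,
$$
\prod_{i=0}^{j}\frac{\beta+i}{\alpha+i} = \frac{\Gamma(\alpha)\,\Gamma(\beta+j+1)}{\Gamma(\beta)\,\Gamma(\alpha+j+1)},
$$
and the standard Stirling asymptotic $\Gamma(\beta+j+1)/\Gamma(\alpha+j+1) \sim j^{\beta-\alpha}$ as $j\to\infty$ yields $\prod_{i=0}^{j}(\beta+i)/(\alpha+i)\asymp j^{\beta-\alpha}$. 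If $\beta > \alpha$ this product diverges, and if $\beta < \alpha$ it tends to zero, both contradicting the two-sided bound from Lemma \ref{SH}. The converse is trivial, so $\alpha=\beta$.

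The only real obstacle is bookkeeping: carefully translating the similarity question from the backward shift to the forward shift when invoking Shields' criterion, and correctly reading off the weight sequence from the reproducing-kernel normalization. The Gamma-function asymptotic at the end is standard and produces the sharp dichotomy immediately.
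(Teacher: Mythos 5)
Your proof is correct and follows the same route the paper intends: the paper's justification for Lemma \ref{ab} is precisely an appeal to Shields' criterion (Lemma \ref{SH}), and you have filled in the details correctly — the weight sequence $\sqrt{(k+1)/(\alpha+k)}$ read off from the kernel normalization, the reduction to forward shifts via adjoints, and the $j^{\beta-\alpha}$ asymptotic for the telescoped product that violates the two-sided bound unless $\alpha=\beta$.
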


\begin{prop}\label{CFJ}Let $T=\bigoplus\limits_{k=1}^{l}S_{\alpha_k}^*$ and
$\widetilde{T}=\bigoplus\limits_{m=1}^{s}S_{\beta_m}^*$, where
$\alpha_k, \beta_m \geq 1$, and $\alpha_k\neq
\alpha_{k^{\prime}}$ and $\beta_{m}\neq \beta_{m^{\prime}}$, for $k \neq k'$ and $m \neq m'$. Then $T$ is
similar to $\widetilde{T}$ if and only if $l=s$ and there exists a
permutation ${\Pi}$ of the set $(1, 2, \cdots, l)$ such that for every $k\leq l$,
$\alpha_k=\beta_{\Pi(k)}$.
\end{prop}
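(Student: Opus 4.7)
The forward implication is immediate: if a permutation $\Pi$ pairs $\alpha_k$ with $\beta_{\Pi(k)}$, then reshuffling the orthogonal summands gives a unitary equivalence between $T$ and $\widetilde T$. For the converse, the plan is to combine the uniqueness of strong irreducible decompositions from Theorem \ref{JGJ} with the scalar similarity criterion of Lemma \ref{ab}, bridging the two via a short doubling trick.

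I would first observe that each $S^*_\alpha$ lies in $B_1(\mathbb{D})$ and is therefore strongly irreducible: any idempotent in its commutant restricts to $0$ or $I$ on each one-dimensional fibre $\ker(S^*_\alpha - w)$, and by continuity on the connected base $\mathbb{D}$ together with the density of the eigenvectors this idempotent must be globally $0$ or $I$. Hence the given block forms of $T$ and $\widetilde T$ are honest unit finite strong irreducible decompositions, and both $T \in B_l(\mathbb{D})$ and $\widetilde T \in B_s(\mathbb{D})$ are Cowen-Douglas operators.

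To align with the hypothesis of Theorem \ref{JGJ}, I would pass to $T^{(2)} = T \oplus T$. If $X$ implements the assumed similarity $T \sim \widetilde T$, then $I \oplus X$ realizes $T^{(2)} \sim T \oplus \widetilde T$; transporting the natural block decomposition of $T \oplus \widetilde T$ through this similarity produces a second strong irreducible decomposition of $T^{(2)}$, of cardinality $l + s$. The obvious decomposition of $T^{(2)}$ has cardinality $2l$, and Theorem \ref{JGJ} together with Definition \ref{UD} forces the two cardinalities to coincide, giving $l = s$. It also supplies an invertible element of $\mathcal{A}'(T^{(2)})$ and a permutation of $\{1,\ldots,2l\}$ matching the idempotents, and restricting $T^{(2)}$ to the corresponding ranges yields pairwise similarities of the summands; by Lemma \ref{ab} these upgrade to equalities $\alpha_k = \beta_{\Pi(k)}$ for a suitable permutation $\Pi$.

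The main obstacle is making Theorem \ref{JGJ}, which is stated for $T^{(n)}$ with identical summands, applicable to our situation where the $\alpha_k$ and the $\beta_m$ are pairwise distinct; the doubling trick resolves this by placing both decompositions inside the commutant of the single Cowen-Douglas operator $T^{(2)}$. A short combinatorial pigeon-hole then finishes the matching: each $\alpha_k$ has multiplicity exactly two in the natural decomposition of $T^{(2)}$ but, by the distinctness assumptions, can occur at most once among the $\alpha$'s and at most once among the $\beta$'s on the other side, so the $\beta$-list is necessarily a permutation of the $\alpha$-list.
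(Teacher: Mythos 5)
Your argument is correct, and it shares the paper's skeleton --- the doubling trick $T^{(2)}\sim T\oplus\widetilde T$, the uniqueness of strong irreducible decompositions from Theorem \ref{JGJ}, and Lemma \ref{ab} to convert similarity of the rank-one pieces into equality of the indices --- but it finishes differently. The paper extracts its contradiction through the $K_0$-group of the commutant: it invokes the computation $K_0\bigl({\mathcal A}^{\prime}(\bigoplus_{j=1}^{t}T_j)\bigr)\cong\mathbb{Z}^{t}$ from \cite{CFJ}, which counts the number of \emph{distinct similarity classes} among the strongly irreducible summands, and compares $\mathbb{Z}^{l}$ for $T^{(2)}$ against $\mathbb{Z}^{l^{\prime}}$ for $T\oplus\widetilde T$ to rule out a $\beta_m$ not appearing among the $\alpha_k$. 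You instead read off the conclusion directly from Definition \ref{UD}: the two unit finite strong irreducible decompositions of $T^{(2)}$ have cardinalities $2l$ and $l+s$, so $l=s$, and the idempotent matching gives an equality of multisets $\{\alpha_1,\dots,\alpha_l\}\uplus\{\alpha_1,\dots,\alpha_l\}=\{\alpha_1,\dots,\alpha_l\}\uplus\{\beta_1,\dots,\beta_l\}$, from which cancellation and the distinctness hypotheses produce the permutation. This is a genuine simplification: it bypasses the $K_0$ machinery entirely, and it yields both $l=s$ and the permutation $\Pi$ in one stroke, whereas the paper's class-counting argument only directly controls the number of distinct classes and leaves the bijectivity to a (tacit) pigeonhole step. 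The one point worth writing out carefully is that the transported family $\{YQ_iY^{-1}\}$ really satisfies all four conditions of a unit finite strong irreducible decomposition --- in particular that strong irreducibility of the restrictions is a similarity invariant --- but that is routine and your sketch covers it.
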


\begin{proof} It suffices to prove one implication and therefore, we let $T$ and $\widetilde{T}$ be similar. Without loss of
generality, we assume that $l <  s$.  It is well-known that
$S^*_{\alpha_k}\in B_1(\mathbb{D})$ and that it is strongly irreducible
in $\mathcal{L}({\mathcal M}_{\alpha_k})$. If we let ${\mathcal
H}=\bigoplus\limits_{k=1}^l{\mathcal M}_{\alpha_k}$, then  $T|_{Ran I_{{\mathcal H}_k}}=S^*_{\alpha_k}$. Analogous results hold for the operator $\widetilde{T}$. Moreover,
since $T\in B_l(\mathbb{D})$ and $\widetilde{T}\in B_s(\mathbb{D})$, $$T\oplus \widetilde{T}= \bigoplus\limits_{k=1}^lS^*_{\alpha_k}\oplus
\bigoplus\limits_{m=1}^{s}S_{\beta_m}^* \in
B_{l+s}(\mathbb{D}).$$

By Theorem \ref{JGJ}, for any positive integer $n$, both $T^{(n)}\oplus \widetilde{T}^{(n)}$ and $T^{(2n)}$ 
 have unique strong irreducible decompositions up to similarity. Moreover,
we know from Lemma \ref{ab} that $S^*_{\alpha_k}$ and $S^*_{\alpha_k'}$ are not similar for $k \neq k'$. The same is true for $S^*_{\beta_m}$.

Now, the results in \cite{CFJ} show that
$$ K_0 \left ({\mathcal A}^{\prime} \left (\bigoplus\limits_{j=1}^t T_j \right ) \right) \cong
\mathbb{Z}^{t},$$
 where the $T_j$ are strongly irreducible Cowen-Douglas operators such that no two of them are similar to each other.
Since each $S^*_{\alpha_k}$ is a strongly irreducible Cowen-Douglas operator, we then have
$$K_0({\mathcal A}^{\prime}(T^{(2)}))=K_0(M_{2}(\mathbb{C})\otimes {\mathcal
A}^{\prime}(T))= K_0({\mathcal A}^{\prime}(T)) \cong
\mathbb{Z}^{l}.$$ Notice that  if $T\oplus \widetilde{T}$ is similar to
$T^{(2)}$, then
$$K_0({\mathcal A}^{\prime}(T\oplus \widetilde{T}))=K_0({\mathcal A}^{\prime}(T^{(2)})) \cong
\mathbb{Z}^{l}.$$
On the other hand,  if there exists a $\beta_m$
such that $S^*_{\beta_m}$ is not similar to any $S^*_{\alpha_k}$ in $T\oplus \widetilde{T}=
\bigoplus\limits_{k=1}^lS^*_{\alpha_k}\oplus
\bigoplus\limits_{m=1}^{s}S_{\beta_m}^*,$ then one can find a positive number
$l^{\prime}>l$ such that
$$K_0({\mathcal A}^{\prime}(T\oplus \widetilde{T})) \cong
\mathbb{Z}^{l^{\prime}}.$$ This is a contradiction.

\end{proof}

The following example shows that the $n$-hypercontractivity assumption cannot be dispensed with in determining similarity. A more general example can be constructed in the same way.

\begin{ex}

For every $w \in \mathbb{D}$, $$\text{trace }\mathcal{K}_{S^*_1\oplus S^*_3}(w)=-\frac{2}{(1-|w|^2)^2}=\text{trace }\mathcal{K}_{S^*_2\oplus S^*_2}(w).$$
But by Proposition \ref{CFJ}, we know that $S^*_1\oplus S^*_3$ is similar to
$S^*_2\bigoplus S^*_2$ if and only if both $S^*_1$ and $S^*_3$ are similar to $S^*_2$, which is a contradiction. In fact, since
$S^*_1$ is not a 2-hypercontraction, $S^*_1 \oplus S^*_3$ cannot be a 2-hypercontraction, either.

\end{ex}

\end{document}